\newcommand{\R}{\mathds{R}}
\newcommand{\E}{\mathbb{E}}
\newcommand{\PP}{\mathbb{P}}
\newcommand{\PPn}{\mathbb{P}^0}
\newcommand{\PPe}{\mathbb{P}^1}
\newcommand{\N}{\mathbb{N}}
\newcommand{\f}{\mathcal F}
\renewcommand{\lll}{\mathcal L}
\newcommand{\proban}{(\Omega^0 ,\f^0,\PPn)}
\newcommand{\probae}{(\Omega^1 ,\f^1,\PPe)}
\newcommand{\probao}{(\Omega ,\f,\PP)}
\newcommand{\probpo}{(\Omega' ,\f',\PP')}
\def\probto{\hbox{$(\widetilde{\Omega},\widetilde{\f},\widetilde{\PP})$}}
\newcommand{\toop}{\stackrel{\PP}{\longrightarrow}}
\newcommand{\tols}{~\stackrel{\lll-(s)}{\longrightarrow}~}
\newcommand{\weak}{\stackrel{w}{\longrightarrow}}
\newcommand{\WEE}{\widetilde{\mathbb{E}}}
\renewcommand{\theequation}{\arabic{section}.\arabic{equation}}
\newtheorem{prop}{Proposition}[section]
\newtheorem{ass}[prop]{Assumption}
\newtheorem{exa}[prop]{Example}
\newtheorem{theo}[prop]{Theorem}
\newtheorem{rem}[prop]{Remark}
\begin{document}

\title{Estimating the quadratic covariation of an asynchronously observed semimartingale with jumps}
\author{Markus Bibinger\thanks{Humboldt-Universit\"at Berlin, Institut f\"ur Mathematik, Unter den Linden 6, 10099 Berlin, Germany. E-Mail: bibinger@math.hu-berlin.de. Financial support from the Deutsche Forschungsgemeinschaft via SFB 649 ``Ökonomisches Risiko'', Humboldt-Universität zu Berlin, is gratefully acknowledged.} \quad and \quad Mathias Vetter\thanks{Ruhr-Universit\"at Bochum, Fakult\"at f\"ur Mathematik, 44780 Bochum, Germany. E-mail: mathias.vetter@rub.de. The author is thankful for financial support through the collaborative
research center ``Statistik nichtlinearer dynamischer Prozesse" (SFB 823) of the Deutsche Forschungsgemeinschaft.}}

\maketitle

\begin{abstract}
We consider estimation of the quadratic (co)variation of a semimartingale from discrete observations which are irregularly spaced under high-frequency asymptotics. In the univariate setting, results from \cite{jacod2008} are generalized to the case of irregular observations. In the two-dimensional setup under non-synchronous observations, we derive a stable central limit theorem for the estimator by \cite{hayayosh2005} in the presence of jumps. We reveal how idiosyncratic and simultaneous jumps affect the asymptotic distribution. Observation times generated by Poisson processes are explicitly discussed.\\[.2cm]
{\it Keywords}: 
asynchronous observations, co-jumps, statistics of semimartingales, quadratic covariation\bigskip
%
%{\it AMS 2000 subject classifications}: primary, 60F05, 60G51, 62H20;
%secondary, 62G32, 62M09.
%
\end{abstract}

%\newpage

\section{Introduction} \label{Intro}
\setcounter{equation}{0}
\renewcommand{\theequation}{\thesection.\arabic{equation}}

Estimating the quadratic variation of a semimartingale $X$ probably is one of the main topics in today's high frequency statistics. Starting with the pioneering work of \citet{andeboll1998} and \citet{barnshep2002} on the use of realized volatility (also called realized variance) as a measure for integrated volatility over a trading day, an enormous number of articles has been dedicated to the development of estimation techniques in this area. Historically first is the extension to power variations which allows for a consistent estimation of integrated quarticity as well -- a necessary task when establishing a so-called feasible central limit theorem for realized volatility that allows to construct confidence sets; see \citet{barnshep2004}.

Estimation approaches for deviations from the idealized setting of observing a continuous semimartingale at equidistant times have attracted a lot of attention since then. For models incorporating jumps, for example, integrated volatility does no longer coincide with the quadratic variation of the underlying process, as it comes from the continuous martingale part of $X$ only. Econometricians, however, are typically interested in estimating integrated volatility due to the belief that this quantity reflects cumulative intrinsic risk of an asset whereas jumps come as external shocks. In the presence of jumps the realized volatility as a discretized quadratic variation converges in probability to the entire quadratic variation under high-frequency asymptotics as the maximum distance between successive observation times tends to zero. This motivated estimators which filter out jumps, like bipower variation by \citet{barnshep2004} and truncated realized volatility by \citet{mancini2009}. Another topic is the treatment of additional microstructure noise in the data; among various proposals see e.g.\ \citet{zhangetal2005}, \citet{barndetal2008} or \citet{podovett2009}. Non-regular observation times have been discussed in various situations: In the univariate context, limit theorems under irregular sampling schemes have been derived both in case of deterministic and random observations times; see e.g.\ \citet{myklzhan2009}, \citet{hayjacyos2011} or \citet{fukarose2012}. In multi-dimensional settings, asynchronicity comes into play which makes the situation more complicated. Let us mention here the approach involving overlapping intervals by \citet{hayayosh2005} and the concept of refresh times from \citet{barndetal2011}.

Where typically less focus has been laid on is statistical inference on the entire quadratic variation of $X$ when jumps are present, though the latter is not only of some importance in economics as a measure of risk comprising jumps and volatility, but also a central quantity in stochastic analysis. Asymptotics in the case of equidistant observations of $X$ are provided as a special case in \citet{jacod2008} who focuses on a number of functionals of semimartingale increments. A similar result for L\'{e}vy processes dates back to \citet{jacodprotter}. Results on an estimator for the quadratic variation when jumps and noise are present are given in \citet{jacodetal2010} for their pre-averaging estimator. Apart from that, at least to the best of our knowledge, no work has dealt with central limit theorems on the entire quadratic (co)variation of $X$, and in particular very few is known in the framework of non-regularly spaced data.

We aim at filling this gap to a certain extent. In a first step, we generalize the asymptotic theory from \citet{jacod2008} on realized volatility and equidistant observations to non-equidistant (univariate) schemes. As a basis for the more involved situations we illuminate how proofs of limit theorems work for general semimartingales in the vein of \citet{podovett2010} who explained limit theorem for the continuous case. In a two-dimensional setting the quadratic covariation
\begin{align}\label{qcv}[X^{(1)},X^{(2)}]_t=\int_0^t\rho_s\sigma_s^{(1)}\sigma_s^{(2)}\,ds+\sum_{s\le t}\Delta X_s^{(1)}\Delta X_s^{(2)}\,\end{align}
is the sum of the integrated covolatility and the sum of products of simultaneous jumps (called co-jumps). The asymptotic theory for co-jumps entails new intriguing attributes and provides deeper insight in the multi-dimensional asymptotic properties of standard estimators.

For non-synchronous observations of continuous It\^o semimartingales, the prominent estimator by \citet{hayayosh2005} for integrated covolatility attains the minimum variance in the general semiparametric situation. We discuss its properties extended to the case of observing a general It\^o semimartingale possibly admitting jumps. Consistency for the entire quadratic covariation is established under mild regularity assumptions. We deduce sufficient conditions on the observation times design to establish a central limit theorem. In particular, we illustrate the formal expressions for the important (and included) setup of exogenous observation times generated by homogenous Poisson processes.

The paper is organized as follows: We review the one-dimensional results by \citet{jacod2008} for realized volatility in Section \ref{sec:2}. The first generalization to non-equidistant observation times is pursued in Section \ref{sec:3}. In Section \ref{sec:4} we develop the asymptotic theory for the Hayashi-Yoshida estimator and non-synchronous two-dimensional observations. The case of Poisson sampling is treated as an explicit example. Section \ref{sec:5} demonstrates the finite sample accuracy in Monte Carlo simulations. The proofs are given in the Appendix. 

\section{The baseline case: univariate regular observations} \label{sec:2}
\setcounter{equation}{0}
\renewcommand{\theequation}{\thesection.\arabic{equation}}

Let us start with revisiting the central limit theorem for realized variance in the presence of jumps for the regular univariate setting which has been found by \cite{jacod2008}. Suppose in the sequel that $X$ is a one-dimensional It\^o semimartingale on $\probao$ of the form
\begin{align} \label{univX}
X_t = X_0 +  \int_0^t b_s\, ds + \int_0^t \sigma_s\, dW_s + \int_0^t \int_{\R} \kappa (\delta(s,z)) (\mu-\nu)(ds,dz) + \int_0^t \int_{\R} \kappa'(\delta(s,z)) \mu(ds,dz),
\end{align}
where $W$ is a standard Brownian motion, $\mu$ is a Poisson random measure on $\R^+ \times \R$, and the predictable compensator $\nu$ satisfies $\nu(ds,dz) = ds \otimes \lambda(dz)$ for some $\sigma$-finite measure $\lambda$ on $\R$ endowed with the Borelian $\sigma$-algebra. $\kappa$ denotes a truncation function with $\kappa(x)=x$ on a neighbourhood of zero and we set $\kappa'(x) = x - \kappa(x)$, to separate the martingale part of small jumps and the large jumps. $\kappa$ is assumed to be continuous here, which helps to simplify notation and further regularity conditions, and with compact support. We impose the following fairly general structural assumptions on the characteristics of $X$. 
\begin{ass} \label{coeff}
The processes $b_s,\sigma_s$ and $s \mapsto \delta(s,z)$ are continuous. Furthermore, we have $|\delta(s,z)| \leq \gamma(z)$ for some bounded positive real-valued function $\gamma$ which satisfies $\int (1 \wedge \gamma^2(z)) \lambda(dz) < \infty$. 
%The volatility process $\sigma_s$ belongs to a Hölder ball of order $\alpha\in(0,1]$ and radius $R>0$, i.e. $\sigma\in C^\alpha(R)$ with
%\begin{align*}&C^{\alpha}(R)=\{f\in C^{\alpha}([0,1],\R)|\|f\|_{C^{\alpha}}\le R\}\text{ where }\|f\|_{C^{\alpha}}\:=\|f\|_{\infty}+\sup_{x\ne y}{\frac{|f(x)-f(y)|}{|x-y|^{\alpha}}}\,.\end{align*}
\end{ass}
Our target of inference is the quadratic variation of the semimartingale $X$ at time $0 < t \leq 1$ which becomes
\[
[X,X]_t = \int_0^t \sigma_s^2 ds + \sum_{s \leq t} (\Delta X_s)^2,
\] 
the sum of the integrated variance and the sum of squared jumps, in the setting above. Here, $\Delta X_s = X_s - X_{s-},X_{s-}=\lim_{t\rightarrow s,t<s}X_t$, denotes the possible jump at time $s$. In the baseline case of equidistant observations, that is we observe $X$ at the regular times $i /n$, $i=0, \ldots , \lfloor nt \rfloor$, \cite{jacod2008} establishes a stable central limit theorem for the natural estimator realized variance. With $\Delta_i^n X = X_{i /n} - X_{(i-1)/n}$, the latter term is defined as
\[
RV_t^n = \sum_{i=1}^{\lfloor nt \rfloor} (\Delta_i^n X)^2.
\]
Before we state the result, let us shortly recall the notion of stable convergence. A family of random variables $(Y_n)$ defined on $\probao$ is said to converge $\mathcal F$-stably in law to $Y$ defined on an extended space $\probto$, if 
\[
\E[g(Y_n)S] \to \widetilde \E[g(Y)S]
\]
for all bounded, continuous $g$ and all bounded $\mathcal F$-measurable random variables $S$. For background information on the notion of stable (weak) convergence we refer interested readers to \cite{jacoshir2003}, \cite{jacodprotter}, \cite{jacod1} and \cite{podovett2010}.
%Before we state his result, we need to introduce some additional notation since limiting variables in this context are naturally defined on an extension of the original probability space. 

In our context, the limiting variable depends on auxiliary random variables. We therefore consider a second space $\probpo$ supporting a standard Brownian motion $W'$ and a sequence $(U'_p)_{p \geq 1}$ of standard normal variables, all mutually independent. The extended space $\probto$ is then given by the (orthogonal) product of the two spaces where all variables above are extended to it in the canonical way. The limiting variables in the central limit theorem for quadratic variation are then defined as follows: Let $(S_p)_{p\ge 1}$ be a sequence of stopping times exhausting the jumps of $X$ and set
\[
Z_t = 2 \sum_{p: S_p \leq t} \Delta X_{S_p} \sigma_{S_p} U'_p \quad \mbox{and} \quad V_t = \sqrt 2 \int_0^t \sigma_s^2 dW'_s.
\]
The stable limit theorem for quadratic variation adopted from \cite{jacod2008} now reads as follows:
\begin{theo} \label{theo1}
Suppose that $X$ is a one-dimensional It\^o semimartingale with representation (\ref{univX}) for which Assumption \ref{coeff} is satisfied. Then for each $0 < t \leq 1$ we have the $\mathcal F$-stable central limit theorem
\begin{align}\label{cltuni}
n^{1/2} (RV_t^n - [X,X]_t) \tols V_t + Z_t.
\end{align}
\end{theo}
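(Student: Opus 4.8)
The plan is to follow the standard route for stable limit theorems of this type, in the spirit of \citet{jacod2008} and \citet{podovett2010}: reduce to bounded characteristics by localization, split the estimation error according to the continuous and discontinuous parts of $X$, handle the resulting pieces separately, and assemble them by joint stable convergence. I would begin with a localization argument: since $b$ and $\sigma$ are continuous, hence locally bounded, and $\gamma$ is bounded with $\int(1\wedge\gamma^2(z))\,\lambda(dz)<\infty$, one may introduce stopping times $T_m\uparrow\infty$ at which these processes leave compact sets, replace $X$ by a process with bounded characteristics, and argue that the $\mathcal F$-stable convergence localized on each $[0,T_m]$ is enough. Writing $X=X^c+X^J$ with $X^c_t=X_0+\int_0^t b_s\,ds+\int_0^t\sigma_s\,dW_s$ the continuous part and $X^J$ collecting the (compensated small and large) jumps, the error decomposes as
\[
n^{1/2}\big(RV_t^n-[X,X]_t\big)=n^{1/2}A_t^n+n^{1/2}B_t^n+n^{1/2}D_t^n,
\]
where $A_t^n=\sum_{i=1}^{\lfloor nt\rfloor}(\Delta_i^n X^c)^2-\int_0^t\sigma_s^2\,ds$, $B_t^n=\sum_{i=1}^{\lfloor nt\rfloor}(\Delta_i^n X^J)^2-\sum_{s\le t}(\Delta X_s)^2$, and $D_t^n=2\sum_{i=1}^{\lfloor nt\rfloor}\Delta_i^n X^c\,\Delta_i^n X^J$.

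For the continuous part I would invoke the classical central limit theorem for the realized variance of a continuous It\^o semimartingale, giving $n^{1/2}A_t^n\tols V_t=\sqrt2\int_0^t\sigma_s^2\,dW'_s$, the limit living on the extended space through the independent Brownian motion $W'$. The pure-jump term should be negligible, $n^{1/2}B_t^n\toop0$: separating the finitely many jumps exceeding a threshold $\epsilon$ from the rest, for large $n$ each such jump sits alone in an interval so that $(\Delta_i^n X^J)^2$ reproduces $(\Delta X_{S_p})^2$ up to cross terms with the drift, compensator and small-jump increments that are $o_P(n^{-1/2})$, while the remaining small-jump contribution is controlled uniformly through $\int(1\wedge\gamma^2)\,d\lambda<\infty$ and sent to zero as $\epsilon\downarrow0$.

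The decisive term is the cross term $D_t^n$, which produces $Z_t$. Localizing again to the large jumps at times $S_p$, each $S_p$ falls into a single interval $((i-1)/n,i/n]$, on which $\Delta_i^n X^J\approx\Delta X_{S_p}$ while the rescaled continuous increment satisfies $n^{1/2}\Delta_i^n X^c\approx n^{1/2}\sigma_{S_p}(W_{i/n}-W_{(i-1)/n})$. Splitting this Brownian increment at $S_p$ into its pre- and post-jump pieces and using the continuity of $\sigma$ (so that the left and right volatilities at $S_p$ coincide), the rescaled increment converges stably to $\sigma_{S_p}U'_p$ with $U'_p$ standard normal and independent of $\mathcal F$; summing over $p$ and keeping the factor $2$ yields $Z_t=2\sum_{p:S_p\le t}\Delta X_{S_p}\sigma_{S_p}U'_p$, the small jumps again being negligible as $\epsilon\downarrow0$.

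It remains to establish the \emph{joint} stable convergence of $(n^{1/2}A_t^n,n^{1/2}D_t^n)$ towards $(V_t,Z_t)$. Since the Brownian increments over the vanishing fraction of intervals straddling the jumps are asymptotically independent of the bulk fluctuation driving $A_t^n$, the two limits decouple into the mutually independent $W'$ and $(U'_p)_{p\ge1}$ on the extended space, and adding $n^{1/2}B_t^n\toop0$ gives the assertion. I expect the main obstacle to be exactly this cross-term analysis for $D_t^n$: proving, through conditional characteristic functions and the general stable-convergence machinery of \citet{jacoshir2003}, that the normalized continuous increments over the jump-intervals converge jointly and stably to an independent Gaussian family, together with justifying the interchange of the limits $n\to\infty$ and $\epsilon\downarrow0$.
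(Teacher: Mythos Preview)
Your proposal is correct and follows the same overall strategy as the paper: localize to bounded characteristics, separate continuous and jump parts, identify the continuous-part CLT as the source of $V_t$ and the cross term as the source of $Z_t$, and prove joint stable convergence after truncating the small jumps. The difference is one of granularity. The paper decomposes $X$ into \emph{six} pieces, introducing two auxiliary integer parameters: $q$ for the jump-truncation level (your $\epsilon$ is $1/q$) and $r$ for a dyadic time-discretization $\sigma(r)$, $b(q,r)$ of the volatility and drift. This discretization lets the paper prove the continuous-case CLT from scratch with piecewise-constant coefficients and makes every remainder estimate explicit; asymptotics are taken as $n\to\infty$, then $r\to\infty$, then $q\to\infty$. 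You instead use the coarser split $X=X^c+X^J$ and invoke the continuous-case CLT as known, which is shorter but delegates that analysis. For the joint stable convergence step the paper follows Lemma~5.8 of \cite{jacod2008}: it enlarges the filtration so that the Brownian motion over small neighbourhoods $[S_p-1/\ell,S_p+1/\ell]$ of the jump times becomes $\mathcal F_0$-measurable, then argues conditionally to decouple the bulk fluctuation from the increments $n^{1/2}(W_{\tau_+^n(S_p)}-W_{\tau_-^n(S_p)})$; this is the concrete implementation of the decoupling you sketch in your final paragraph.
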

\begin{rem} \rm
Even though $Z_t$ might depend on the particular choice of the stopping times, it is shown in \cite{jacod2008} that its $\f$-conditional law does not. By definition of stable convergence, this is all that matters. Note also that the result above only holds for a fixed $t > 0$, but not in a functional sense, except $X$ is continuous. This is due to the fact that a large jump at time $t$ is by definition included in $[X,X]_t$, but usually not in $RV_t^n$, as the latter statistic only counts increments up to time $\lfloor nt \rfloor /n$. For a fixed $t$, this is not relevant, as the expectation of large jumps close to time $t$ is small, but in a process sense this issue becomes important. One can account for this fact by subtracting $[X,X]_{\lfloor nt \rfloor /n}$ in (\ref{cltuni}) instead, however.\qed
\end{rem}
We give a proof of Theorem \ref{theo1} in Appendix \ref{app2} and \ref{app2b}, basically for two reasons: First, the analogue of Theorem \ref{theo1} is only a special case of the much more general discussion in \cite{jacod2008}, and we believe that it is interesting to highlight how proofs of stable central limit theorems concerned with jumps work in this special (but nevertheless important) situation. In this sense, the first part of this paper can be understood as a follow-up to \cite{podovett2010} where the focus was on explaining limit theorems for continuous semimartingales. Second, the proof serves as foundation for all other setups where we employ the results provided for the baseline case discussed in this section.

Throughout the paper, we restrict ourselves to continuous $\sigma$. This condition can be weakened in the sense that it might be some It\^o semimartingale itself. We refer to \cite{jacod2008} to an extension of \eqref{cltuni} allowing even for common jumps of $\sigma$ and $X$, in which the limit $Z_t$ is slightly more complicated. Since we shall focus on the effects of irregular sampling, and also on the impact of jumps on the Hayashi-Yoshida estimator in the multivariate case, which furnish several new interesting effects, we believe this slight simplification helps to keep the asymptotic results readable and clear.  

\section{Asymptotics for irregular sampling schemes} \label{sec:3}
\setcounter{equation}{0}
\renewcommand{\theequation}{\thesection.\arabic{equation}}
The situation changes when the observations do not come at regular times anymore. In general, at stage $n$ one observes a one-dimensional process $X$ at arbitrary times $0= t_0^n < t_1^n < \ldots$, which may either be deterministic or random (stopping) times, and a further distinction in the random case regards independent and endogenous sampling schemes. The latter are by far the most complicated, and it is well-known that already in the continuous case central limit theorems become non-standard for observations based e.g.\ on hitting times of $X$; see \cite{fukarose2012} and related papers. For this reason, we restrict ourselves in this work to either deterministic observations times or those coming from independent random variables. Even in this case, it is hard to derive asymptotics in general, and this becomes particularly virulent in the multi-dimensional framework discussed in the next section. 

We use the notation \(m^n_-(t) = \max \{i: t_i^n \leq t\}, \tau^n_-(t)=\max \{t_i^n: t_i^n \leq t\}\) and \(m^n_+(t) = \min \{i: t_i^n \ge t\}, \tau^n_+(t)=\min \{t_i^n: t_i^n \ge  t\}\) for an arbitrary $0 \leq t \leq 1$, referring to the number of observations around time $t$ and to the previous and next ticks.
A necessary condition in order to infer on the quadratic variation of $X$ is to secure that the mesh of the observation times $\pi_n = \max \{|t_i^n - t_{i-1}^n| | i =1, \ldots, m_-^n(1)\}$ tends to zero (in probability) as $n$ increases. Standard results from stochastic analysis then ensure consistency of realized variance as an estimator for the quadratic variation, which becomes 
\[
RV_t^n = \sum_{i=1}^{m^n_{-}(t)} |\Delta_i^n X|^2 \toop [X,X]_t
\]
in this context. Here we have set $\Delta_i^n X = X_{t_i^n} - X_{t_{i-1}^n}$ again.

In order to derive a central limit theorem for $RV_t^n$, we need sharper bounds on the order of $\pi_n$ as well as two regularity conditions on increments of the observations times. The first assumption is concerned with the variance due to the continuous martingale part, whereas the second one is about local regularity around possible jump times. It looks rather complicated, but reflects precisely what is needed to prove stable convergence later on.
\begin{ass} \label{scheme}
Suppose that the variables $t_i^n$ are stopping times which satisfy $\E[\pi_n^q] = o(n^{-\alpha})$ for all $q \geq 1$ and any $0 < \alpha < q$. Furthermore, we assume 
\begin{itemize}
	\item[(i)] that there exists a continuously differentiable function $G:[0,1] \to [0,\infty)$, such that the convergence
\begin{align} \label{Hexist}
G(t) = \lim_{n \to \infty} G_n(t) := \lim_{n \to \infty} n \sum_{i=1}^{m^n_-(t)} (t_i^n - t_{i-1}^n)^2
\end{align}
holds pointwise (in probability)
\item[(ii)] that for any $0 < t \leq 1$ and any $k \in \N$ we have convergence of
\begin{align} \label{relterm}
\int_{[0,t]^k} g(x_1, \ldots, x_k) \E\Big[\prod_{p=1}^k h_p(n(\tau^n_+(x_p) - \tau^n_-(x_p)))\Big] dx_k \ldots dx_1
\end{align}
to
	\begin{align} \label{locexist} 
	\int_{[0,t]^k} g(x_1, \ldots, x_k) \prod_{p=1}^k \int_\R h_p(y) \Phi(x_p,dy) dx_k \ldots dx_1
	\end{align}
as $n\rightarrow\infty$, where the $\Phi(x,dy)$ denote a family of probability measures on $[0,\infty)$ with uniformly bounded first moment and $g$ and $h_p$, $p=1, \ldots, k$, are bounded continuous functions.
%	\item[(ii)] that one of the two following local conditions is satisfied:
%	\begin{itemize}
%	\item[(ii-a)]For any $s$, $0<s\le t$,
%	\begin{subequations}
%	\begin{align} \label{locexist1} 
%	n(\tau_+^n(s) - \tau_-^n(s)) \to g(s)
%	\end{align}
%	as $n\rightarrow\infty$ for a deterministic function $g$.
%	\item[(ii-b)]For a random variable $U$ uniformly distributed on $[0,t]$,
%	\begin{align} \label{locexist2} 
%	n(\tau_+^n(U) - \tau_-^n(U)) \stackrel{\mathcal{L}}{\longrightarrow} \eta
%	\end{align}
%	as $n\rightarrow\infty$ for a random variable $\eta$ independent of $U$.
%	\end{subequations}
%	\end{itemize} 
\end{itemize}
\end{ass}

Note in Assumption \ref{scheme} (ii) that the expectation of products in \eqref{relterm} becomes a product of expectations in \eqref{locexist}. This means that after standardization the lengths of the intervals around the (jump) times $x_p$ converge to independent variables, whose distributions may in general depend on $x_p$. The latter property reflects for example that there might be periods in which observations come more often than in others. 

\begin{exa} \label{detsche} \rm Suppose that the sampling scheme is deterministic with $t_i^n = f(i/n)$ for some strictly isotonic, deterministic function $f: [0,1] \to [0,1]$. If $f$ is continuously differentiable, then Assumption \ref{scheme} is satisfied with the deterministic limits
\[
G(t) = \lim_{n \to \infty} n \sum_{i=1}^{m^n_-(t)} (t_i^n - t_{i-1}^n)^2 = \lim_{n \to \infty} \frac 1n \sum_{i=1}^{m^n_-(t)} (f'(\xi_i^n))^2 = \int_0^t (f'(x))^2 dx.
\]
In order to prove the representation \eqref{locexist} set $\eta(x)=f^{\prime}(f^{-1}(x))$. Since the design is deterministic, the expectation in (\ref{relterm}) can be dropped and we obtain 
\[
n(\tau_+^n(x) - \tau_-^n(x)) = n\left[f\left(\left\lceil n f^{-1}(x)\right\rceil/n) - f(\left\lfloor  n f^{-1}(x) \right\rfloor/n \right) \right] \to f'(f^{-1}(x)) = \eta(x).
\]
Therefore \eqref{locexist} holds with the deterministic $\Phi(x,dy) = \delta_{\eta(x)}(dy).$ The bound on $\E[\pi_n^q]$ is trivially satisfied as well. \qed
\end{exa}
\begin{exa} \label{poische} \rm Alternatively, one might want to work with a random observation scheme. Classical is Poisson sampling, where $(N^n)_t$ is a Poisson process with intensity $n\lambda$ for $\lambda > 0$ fixed and each $n$, and the observations time $t_i^n$ is equivalent to the time of the $i$-th jump of $N^n$. In this case, we have
\[
n \sum_{i=1}^{m^n_-(t)} (t_i^n - t_{i-1}^n)^2 = n \sum_{i=1}^{\left\lceil n\lambda t \right\rceil} (t_i^n - t_{i-1}^n)^2 + o_p(1) = 2t/\lambda + o_p(1),
\]
since the $t_i^n - t_{i-1}^n$ form a sequence of i.i.d.\ $\exp(n\lambda)$-variables. We have used both Lemma 8 in \cite{hayayosh2008}, which states that $\E[\pi_n^q] = o(n^{-\alpha})$ is satisfied, and arguments from the proof of Lemma 10, which show that $m^n_-(t)$ is close to $\left\lceil n\lambda t \right\rceil$, to obtain the first relation above. Let us now derive the limit of 
\[
\E\Big[\prod_{p=1}^k h_p(n(\tau^n_+(x_p) - \tau^n_-(x_p)))\Big]
\]
for any fixed $x_1, \ldots, x_k$, and we start with $k=1$. First, due to memorylessness $n(\tau_+^n(x_1) - x_1) \sim \exp(\lambda)$. On the other hand, a standard result in renewal theory (see e.g.\ \cite{cox1970}; page 31) gives the distribution of the backward recurrence time of the Poisson process: 
\[
\PP(n (x_1- \tau_-^n(x_1)) \leq u) = \begin{cases} 1, \quad &u=n x_1, \\ 1- e^{-\lambda u}, \quad &0 < u < nx_1.\end{cases}
\]
Therefore, $n (x_1- \tau_-^n(x_1)) \weak \exp(\lambda)$ as $n\rightarrow\infty$, and from the strong Markov property, which secures independence of the two summands, we have $n(\tau_+^n(x_1) - \tau_-^n(x_1)) \weak \Gamma(2,\lambda)$. Similarly, for a general $k$, one can show that the $n(\tau_+^n(x_k) - \tau_-^n(x_k))$ are asymptotically independent, and all sequences of random variables obviously have the same limiting distribution. Condition \eqref{locexist} is therefore valid with $\Phi(x,dy)$ being the distribution of a $\Gamma(2,\lambda)$ variable for all $x$.\qed
\end{exa}
\begin{exa} \label{detirsche} \rm As a third example we consider a deterministic irregular scheme with a truly random limiting distribution $\Phi(x,dy)$. Consider the sequence of observation times $t_i^n=i/n,i=2,4,\ldots,$ for even numbers and $t_i^n=(i+\alpha)/n,i=1,3,\ldots,$ for odd numbers with some $0<\alpha<1$. For fixed $x_1$, interval lengths $n(\tau_+(x_1)-\tau_-(x_1))$ can alternate between $(1+\alpha)$ and $(1-\alpha)$ and do not converge. This is where a random limit comes into play: Let us again discuss $k=1$ in detail. Setting $[0,t]=A_n \cup B_n$, where $A_n$ denotes the subset on which $n(\tau_+(x_1)-\tau_-(x_1)) = 1+\alpha$ and  $B_n$ the one with $n(\tau_+(x_1)-\tau_-(x_1)) = 1-\alpha$, we obtain from continuity of $g$
\begin{align*}
\int_{[0,t]^k} g(x_1) h_1(n(\tau^n_+(x_1) - \tau^n_-({{x_1}}))) dx_1 &= h_1(1+\alpha) \int_{A_n} g(x_1) dx_1 +  h_1(1-\alpha) \int_{B_n} g(x_1) dx_1 \\ &\sim \frac 1t (h_1(1+\alpha) \lambda(A_n) + h_1(1-\alpha) \lambda(B_n)) \int_0^t g(x_1) dx_1 \\ &\to (h_1(1+\alpha) (1+\alpha)/2 + h_1(1-\alpha) (1-\alpha)/2) \int_0^t g(x_1) dx_1,
\end{align*}
where $\lambda$ denotes the Lebesgue measure. Thus, $\Phi(x,dy)$ is again independent of $x$ and has two atoms taking the value $(1+\alpha)$ with probability $(1+\alpha)/2$ and the value $(1-\alpha)$ with probability $(1-\alpha)/2$. A generalization to arbitrary $k$ is straightforward. Note also that the condition on $\pi_n$ is satisfied by definition and that \eqref{Hexist} holds with $G(t) = (1+\alpha^2)t$. \qed
\end{exa}
Let us now extend \eqref{cltuni} to this framework. Thereto, denote with $\proban$ a probability space on which $X$ is defined, and we assume all observation times $t_i^n$ to live on $\probae$. We can now define $\probao$ as the product space of these two, while $\probpo$ is defined similarly as before, but it is assumed to accommodate independent random variables $(\eta(x))_{0 \leq x \leq t}$ as well, with distribution $\Phi(x,dy)$ as in (\ref{locexist}).  $\probto$ finally is the orthogonal product of the latter two spaces again. 
\begin{theo} \label{theo1b}
Suppose that $X$ is a one-dimensional It\^o semimartingale with representation (\ref{univX}) for which Assumption \ref{coeff} is satisfied. If also Assumption \ref{scheme} on the observation scheme holds, then for each $0 < t \leq 1$ we have the $\mathcal F^0$-stable central limit theorem
\begin{align}\label{cltunib}
n^{1/2} (RV_t^n - [X,X]_t) \tols \widetilde V_t + \widetilde Z_t, 
\end{align}
where 
\[\widetilde V_t = \sqrt 2 \int_0^t \sigma_s^2 (G'(s))^{1/2} dW'_s\]
and
\[\widetilde Z_t = 2 \sum_{p: S_p \leq t} \Delta X_{S_p} \big(\eta(S_p)\big)^{1/2} \sigma_{S_p} U'_p.\]
Here, the $S_p$ are stopping times exhausting the jumps of $X$ and the $(U'_p)$ are i.i.d.\ standard normal on $\probpo$ as before.
\end{theo}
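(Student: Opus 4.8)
The plan is to retrace the proof of Theorem \ref{theo1}, keeping track of where the irregular spacings enter, and to split the analysis into a continuous part producing $\widetilde V_t$ and a jump part producing $\widetilde Z_t$. After the usual localization (which under Assumption \ref{coeff} allows us to assume $b$, $\sigma$ and $\gamma$ bounded and $\sigma$ uniformly continuous on $[0,1]$), I would fix a threshold $\varepsilon>0$ and decompose $X$ into the part carrying the finitely many jumps larger than $\varepsilon$ on $[0,t]$ --- occurring at the exhausting stopping times $S_1,\dots,S_P$ --- and a remainder $X'(\varepsilon)$ whose jumps are all smaller than $\varepsilon$. The standard argument then shows that $n^{1/2}(RV^n_t-[X,X]_t)$ reduces, up to negligible remainders, to (a) the error committed on intervals containing no big jump, plus (b) the cross terms $2\,\Delta X_{S_p}\,\Delta_{i(p)}^n X'(\varepsilon)$ on the unique interval $(\tau^n_-(S_p),\tau^n_+(S_p)]$ carrying each $S_p$. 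Indeed, the squares $(\Delta X_{S_p})^2$ are exactly the jump part of $[X,X]_t$ and cancel, while the squared increment of $X'(\varepsilon)$ across such an interval is of smaller order after multiplication by $n^{1/2}$.

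For the continuous part (a), I would apply a triangular-array martingale central limit theorem to $n^{1/2}\sum_i\big((\Delta_i^n X'(\varepsilon))^2-\int_{t^n_{i-1}}^{t^n_i}\sigma_s^2\,ds\big)$. The $\f$-conditional variance is governed to leading order by the terms $2\sigma^4_{t^n_{i-1}}(t^n_i-t^n_{i-1})^2$, coming from $\operatorname{Var}((\Delta_i^n W)^2)=2(t^n_i-t^n_{i-1})^2$. Multiplying by $n$ and invoking Assumption \ref{scheme}(i), so that $n\sum_{t^n_i\le s}(t^n_i-t^n_{i-1})^2\to G(s)$, together with the continuity of $\sigma$, the predictable bracket converges to $2\int_0^t\sigma_s^4\,G'(s)\,ds$, which is precisely the variance of $\widetilde V_t$. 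The conditional Lindeberg condition follows from $\E[\pi_n^q]=o(n^{-\alpha})$, and the asymptotic orthogonality to $W$ and to the jump martingale yields the $\f^0$-stable convergence to $\widetilde V_t=\sqrt2\int_0^t\sigma_s^2(G'(s))^{1/2}\,dW'_s$. This is the part closest in spirit to \citet{podovett2010}.

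The delicate term is (b). Conditionally on the jump times and sizes, each cross term equals, to leading order, $2\,\Delta X_{S_p}\,\sigma_{S_p}\,(W_{\tau^n_+(S_p)}-W_{\tau^n_-(S_p)})$, a centred Gaussian whose variance after normalization by $n^{1/2}$ is $4(\Delta X_{S_p})^2\sigma_{S_p}^2\,n(\tau^n_+(S_p)-\tau^n_-(S_p))$, and the increments attached to distinct jumps are conditionally independent. Assumption \ref{scheme}(ii) is tailored exactly to this situation: integrating the normalized lengths $n(\tau^n_+(x_p)-\tau^n_-(x_p))$ over the jump positions $x_p$ against the jump intensity and a test function $g$, the expectation of the product of the $h_p$ factorizes in the limit into $\prod_p\int h_p(y)\,\Phi(x_p,dy)$. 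This delivers both the asymptotic independence of the normalized lengths and the identification of their limits as independent draws $\eta(S_p)$ with law $\Phi(S_p,\cdot)$. Consequently the vector of cross terms converges stably to $\sum_p 2\,\Delta X_{S_p}\,\sigma_{S_p}\,(\eta(S_p))^{1/2}U'_p=\widetilde Z_t$, where the $U'_p$ are the limiting standardized Brownian increments and the $\eta(S_p)$ live on $\probpo$.

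Finally I would assemble the joint $\f^0$-stable convergence of (a) and (b): conditionally on $\f^0$ the two families are driven by Brownian increments over asymptotically disjoint regions, the bulk of $[0,t]$ versus shrinking neighbourhoods of the finitely many jumps, so they are asymptotically independent and combine to $\widetilde V_t+\widetilde Z_t$ with $\widetilde V_t$, $\widetilde Z_t$ conditionally independent. Letting $\varepsilon\to0$ closes the proof, the small-jump contribution being controlled through $\int(1\wedge\gamma^2(z))\,\lambda(dz)<\infty$ and the uniformly bounded first moments of $\Phi$. I expect the main obstacle to be step (b): correctly extracting the $(\eta(S_p))^{1/2}$ weights and establishing the asymptotic independence of the length variables jointly with their independence from the continuous limit, which is exactly the reason why the technical-looking Assumption \ref{scheme}(ii) is phrased as a factorization of moments.
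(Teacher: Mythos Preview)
Your proposal is correct and follows essentially the same route as the paper: a large-jump threshold (the paper's parameter $q$), a martingale CLT for the continuous part via Assumption \ref{scheme}(i), cross terms yielding $\widetilde Z_t$ via Assumption \ref{scheme}(ii), and a joint-convergence argument based on asymptotic disjointness of the Brownian increments involved. The paper adds two technical devices you will want for the execution: a further dyadic discretization of $\sigma$ (parameter $r$, so that the limiting order is $n\to\infty$, then $r\to\infty$, then $q\to\infty$), and---for the joint stable convergence that you rightly flag as the main obstacle---an explicit filtration enlargement $\mathcal F^{0,\ell}$ making the Brownian path on shrinking neighbourhoods of the $S_p$ measurable at time $0$ (after Lemma 5.8 in \cite{jacod2008}), together with conditioning on the entire sampling scheme via $\mathcal G_{i-1}^n=\mathcal F^0_{t_{i-1}^n}\vee\mathcal F^1$ when running the triangular-array CLT.
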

This theorem has already been known in the literature, if $X$ is a continuous process; see e.g.\ the survey by \cite{myklzhan2012}.
\begin{rem} \rm
Both limiting processes look similar to the ones obtained in Theorem \ref{theo1}, apart from different standardizations due to irregular sampling. What is interesting, however, is the nature of the scaling in the part due to jumps. The schemes considered in Example \ref{detsche} are locally regular, which leads to deterministic $\eta(S_p)$ as well. On the other hand, both the Poisson sampling and the deterministic design in Example \ref{detirsche} show local irregularities, resulting in random (but time-homogeneous) limits $\eta(S_p)$. Nevertheless, we still have regularity on a global level even for these sampling schemes, leading to a deterministic limit of $G_n(t)$ in all three cases. \qed
\end{rem}

%Erwähnen, dass Jacod auch multivariate Resultate behandelt. 
\section{Asymptotics in the multivariate case} \label{sec:4}
\setcounter{equation}{0}
\renewcommand{\theequation}{\thesection.\arabic{equation}}
This section is devoted to non-synchronous discrete observations of a multi-dimensional It\^{o} semimartingale with jumps. It is informative to stick to a two-dimensional setting and an underlying semimartingale of similar form as \eqref{univX} before:
\begin{align}\notag
X_t &= \left(X^{(1)},X^{(2)}\right)_t^{\top}\\
&=  \label{multivX}X_0 +  \int_0^t b_s\, ds + \int_0^t \sigma_s\, dW_s + \int_0^t \int_{\R^2} \kappa (\delta(s,z)) (\mu-\nu)(ds,dz) + \int_0^t \int_{\R^2} \kappa'(\delta(s,z)) \mu(ds,dz),
\end{align}
where $W=\left(W^{(1)},W^{(2)}\right)^{\top}$  denotes a two-dimensional standard Brownian motion, and we assume without loss of generality
\[
\sigma_s = \begin{pmatrix} \sigma_s^{(1)} & 0 \\ \rho_s \sigma_s^{(2)} & \sqrt{1-\rho_s^2} \sigma_s^{(2)}\end{pmatrix} ~\mbox{such that} ~~\sigma_s\sigma_s^{\top}=\begin{pmatrix} \big(\sigma_s^{(1)}\big)^2 & \rho_s \sigma_s^{(1)}\sigma_s^{(2)}  \\ \rho_s \sigma_s^{(1)}\sigma_s^{(2)} & \big(\sigma_s^{(2)}\big)^2\end{pmatrix},
\]
while the other characteristics are defined analogously to Section \ref{sec:2} with two-dimensional jump measures. Denote with $\|\,\cdot\,\|$ the spectral norm. We develop a theory for general jump measures comprising co-jumps ($X^{(1)}$ and $X^{(2)}$ jump at the same time) and idiosyncratic jumps of the components.

We investigate the estimator by \cite{hayayosh2005}, called HY-estimator in the following, under the influence of jumps. The HY-estimator has been proposed and is well-studied for integrated covolatility estimation from asynchronous observations of a continuous It\^{o} semimartingale; see \cite{hayayosh2008} and \cite{hayayosh2011}. Our structural hypothesis for the characteristics of $X$ reads similar as Assumption \ref{coeff} in Section \ref{sec:2}:
\begin{ass} \label{coeff2}
 Assume that $b_s,$ $\sigma_s^{(1)}$, $\sigma_s^{(2)}$, $\rho_s$ and $s\mapsto \delta(s,z)$ are continuous and that $\|\delta(s,z)\| \leq \gamma(z)$ for a bounded positive real-valued function $\gamma$ which satisfies $\int (1 \wedge \gamma^2(z)) \lambda(dz) < \infty$. 
\end{ass}
By It\^{o} isometry, we may expect that in the presence of jumps the HY-estimator is suitable for estimating the entire quadratic covariation \eqref{qcv}. Yet, there are several open questions which we address in this section and an asymptotic distribution theory of the HY-estimator with jumps is unexplored territory.
\subsection{Discussion of the HY-estimator and notation}
The HY-estimator is the sum of products of increments with overlapping observation time instants:
\begin{subequations}
\begin{align}\label{HY}\widehat{\Big[ X^{(1)}, X^{(2)}\Big]}_t^{(HY),n}=\sum_{t_i^{(1)}\le t}\,\sum_{t_j^{(2)}\le t}\Big(X^{(1)}_{t_i^{(1)}}-X^{(1)}_{t_{i-1}^{(1)}}\Big)\Big(X^{(2)}_{t_j^{(2)}}-X^{(2)}_{t_{j-1}^{(2)}}\Big)1_{\Big\{\min{\Big(t_{i}^{(1)},t_{j}^{(2)}\Big)}>\max{\Big(t_{i-1}^{(1)},t_{j-1}^{(2)}\Big)}\Big\}},\end{align}
when $X^{(l)}, l=1,2,$ is observed at times $t_i^{(l)}$. In the sequel, we introduce the notion of several interpolation functions and sequences dependent on the observation times. Let $\pi_n=\max_{i,l}\{|t_i^{(l)}-t_{i-1}^{(l)}|\}$ denote the mesh.
We define
\begin{align*}\tau_+^{(l)}(s)&=\min_{i\in\{0,\ldots,n_l\}}{\Big(t_i^{(l)}|t_i^{(l)}\ge s\Big)}, \quad m_+^{(l)}(s)=\min{\Big(i|t_i^{(l)}\ge s\Big)}\quad\mbox{and}\\
\tau^{(l)}_{-}(s)&=\max_{i\in\{0,\ldots,n_l\}}{\Big(t_{i}^{(l)}|t_i^{(l)}\le s\Big)}, \quad m_-^{(l)}(s)=\max{\Big(i|t_i^{(l)}\le s\Big)}\end{align*}
for $l=1,2$, and $s\in[0,1]$. Let us further introduce the shortcuts
\[\tau_{++}^{(l,r)}(s)=\tau_+^{(l)}\Big(\tau_+^{(r)}(s)\Big)\quad \mbox{and} \quad \tau_{--}^{(l,r)}(s)=\tau_-^{(l)}\Big(\tau_-^{(r)}(s)\Big)\,\]
and $\tau_{++}^{(r,l)}(s), \tau_{--}^{(r,l)}(s)$, analogously.
A synchronous grid serving as a reference scheme is given by the sequence of refresh times
\begin{align*}T_k=\max{\Big(\tau_+^{(1)}(T_{k-1}),\tau_+^{(2)}(T_{k-1})\Big)}, ~k=0,\ldots,M^n(1),\end{align*}
with the convention $T_{-1}=0$ and where we denote with $M^n(t)$ the number of refresh times smaller than or equal to $t\in[0,1]$. Each increment $T_k-T_{k-1}$ thus is the waiting time until both components of $X$ have been observed again. The use of refresh times is adopted from \cite{barndetal2011} where the same synchronous scheme is employed in a more general way. For notational convenience, indices referring to dependence on $n$ for sampling times are often suppressed in the multi-dimensional setup.

Based on telescoping sums, the HY-estimator \eqref{HY} can be rewritten
\begin{align}\label{varHY1}\widehat{\Big[ X^{(1)}, X^{(2)}\Big]}_t^{(HY),n}&=\sum_{i=1}^{m_-^{(1)}(t)-1} \Big(X^{(1)}_{t_i^{(1)}}-X^{(1)}_{t_{i-1}^{(1)}}\Big)\Big(X^{(2)}_{\tau_+^{(2)}(t_i^{(1)})}-X^{(2)}_{\tau_-^{(2)}(t_{i-1}^{(1)})}\Big)+\mathcal{O}_p(\pi_n)\\
&\label{varHY2}=\sum_{j=1}^{m_-^{(2)}(t)-1}  \Big(X^{(2)}_{t_j^{(2)}}-X^{(2)}_{t_{j-1}^{(2)}}\Big)\Big(X^{(1)}_{\tau_+^{(1)}(t_j^{(2)})}-X^{(1)}_{\tau_-^{(1)}(t_{j-1}^{(2)})}\Big)+\mathcal{O}_p(\pi_n)\\
&\label{varHY3}=\sum_{k=1}^{M^n(t)-1}\Big(X_{\tau_+^{(1)}(T_k)}^{(1)}-X_{\tau_-^{(1)}(T_{k-1})}^{(1)}\Big)\Big(X_{\tau_+^{(2)}(T_k)}^{(2)}-X_{\tau_-^{(2)}(T_{k-1})}^{(2)}\Big)+\mathcal{O}_p(\pi_n).\end{align}
\end{subequations}
The $\mathcal{O}_p(\pi_n)$ terms in \eqref{varHY1}--\eqref{varHY3} are only due to possible end effects at time $t$. Apart from this, the above equalities hold exactly. Illustrations \eqref{varHY1}--\eqref{varHY3} reveal that the estimation error of the HY-estimator can be decomposed in the one of a usual synchronous-type realized covolatility and an additional error induced by non-synchronicity and interpolations.
To simplify notation a bit, we write from now on
\begin{align*}\Delta_i^{n_1}X^{(1)}=\Big(X^{(1)}_{t_i^{(1)}}-X^{(1)}_{t_{i-1}^{(1)}}\Big), \quad \Delta_j^{n_2}X^{(2)}=\Big(X^{(2)}_{t_j^{(2)}}-X^{(2)}_{t_{j-1}^{(2)}}\Big), \quad \Delta_k^nX^{(l)}=\Big(X^{(l)}_{T_k}-X^{(l)}_{T_{k-1}}\Big), ~l=1,2,\end{align*}
and for previous and next-tick interpolations with respect to the refresh time scheme
\begin{align*}\Delta_k^{+,n}X^{(l)}=\Big(X^{(l)}_{\tau_+^{(l)}(T_k)}-X^{(l)}_{T_k}\Big), \quad \Delta_k^{-,n}X^{(l)}=\Big(X^{(l)}_{T_{k-1}}-X^{(l)}_{\tau_-^{(l)}(T_{k-1})}\Big), ~l=1,2.\end{align*}
Also, we denote with $\Delta_k^n$ the refresh time instants $(T_k-T_{k-1})$ and $\Delta_k^{+,n_l}=(\tau_+^{(l)}(T_k)-T_k)$ are the next- and $\Delta_k^{-,n_l}=(T_{k-1}-\tau_-^{(l)}(T_{k-1}))$ the previous-tick interpolations.

When decomposing $X$ in different terms by the continuous part, jumps and cross terms, we can use any of the illustrations \eqref{HY}--\eqref{varHY3} to analyze those terms. Therefore, to gain deeper insight and to get used to the notation, let us delve into the different ways to illustrate and construct the HY-estimator:
\begin{itemize}
\item[\eqref{HY}]This is the original idea to sum all products of increments, belonging to time intervals between adjacent observations which have a non-empty intersection.
\item[\eqref{varHY1}]Trace out all increments of $X^{(1)}$ and sum up products with the interpolated increments of $X^{(2)}$: \(\Delta_i^{n_1}X^{(1)}\Big(X^{(2)}_{\tau_+^{(2)}(t_i^{(1)})}-X^{(2)}_{\tau_-^{(2)}(t_{i-1}^{(1)})}\Big),~i=1,\ldots,m_-^{(1)}(t)-1.\)
\item[\eqref{varHY2}]Trace out all increments of $X^{(2)}$ and sum up products with the interpolated increments of $X^{(1)}$: \(\Delta_j^{n_2}X^{(2)}\Big(X^{(1)}_{\tau_+^{(1)}(t_j^{(2)})}-X^{(1)}_{\tau_-^{(1)}(t_{j-1}^{(2)})}\Big),~j=1,\ldots,m_-^{(2)}(t)-1.\)
\item[\eqref{varHY3}]Consider the refresh times grid and sum up products of interpolated increments of $X^{(1)}$ and $X^{(2)}$:\\ \(\Big(\Delta_k^{+,n}X^{(1)}+\Delta_k^n X^{(1)}+\Delta_k^{-,n}X^{(1)}\Big)\Big(\Delta_k^{+,n}X^{(2)}+\Delta_k^n X^{(2)}+\Delta_k^{-,n}X^{(2)}\Big), ~k=1,\ldots,M^n(t)-1.\) At least one of the previous-tick and one of the next-tick interpolations equal zero.
\end{itemize}
\begin{figure}[t]
\centering
\fbox{
\includegraphics[width=16cm]{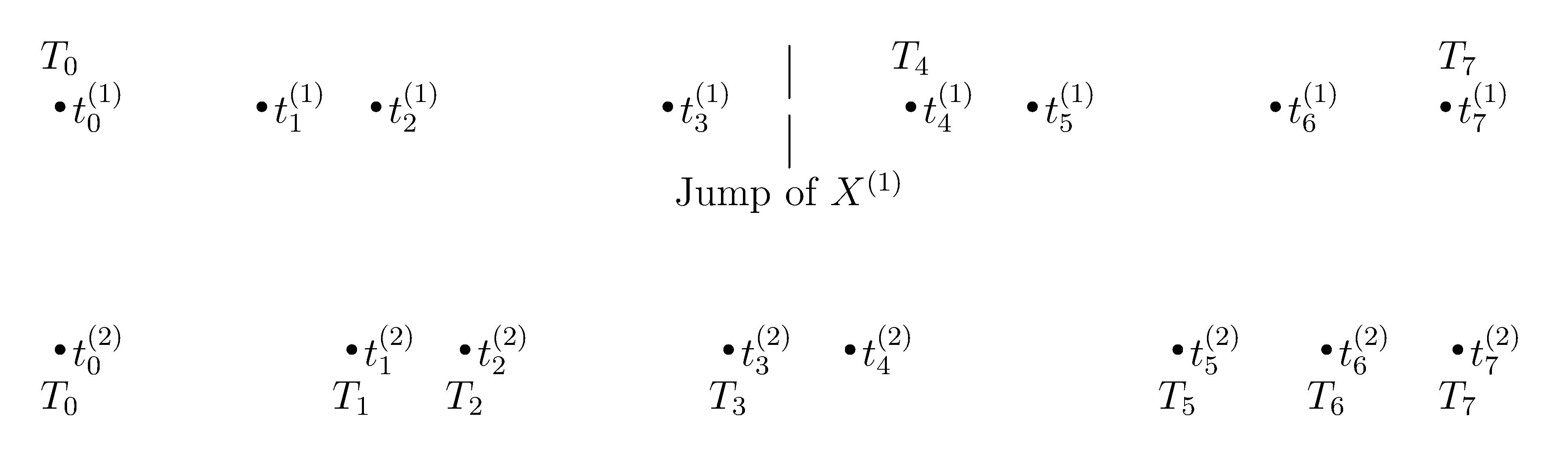}}
\caption{\label{fig1}Example of observation times allocation.}
\end{figure}
\begin{exa} \rm
To illuminate the transformations between \eqref{HY}--\eqref{varHY3} by rearranging addends, particularly in the presence of a jump, we examine a small example displayed in Figure \ref{fig1}. Focusing on the increment $\Delta_4^{n_1}X^{(1)}$ that contains a jump, \eqref{HY} tells us that this increments is considered in the addends
\begin{align*}\Delta_4^{n_1}X^{(1)}\Delta_3^{n_2}X^{(2)}+\Delta_4^{n_1}X^{(1)}\Delta_4^{n_2}X^{(2)}+\Delta_4^{n_1}X^{(1)}\Delta_5^{n_2}X^{(2)}.\end{align*}
If we start with illustration \eqref{varHY1}, we directly obtain
\begin{align*}\Delta_4^{n_1}X^{(1)}\Big(X_{\tau_+^{(2)}(t_4^{(1)})}^{(2)}-X^{(2)}_{\tau_-^{(2)}(t_3^{(1)})}\Big)=\Delta_4^{n_1}X^{(1)}\Big(\Delta_3^{n_2}X^{(2)}+\Delta_4^{n_2}X^{(2)}+\Delta_5^{n_2}X^{(2)}\Big)\end{align*}
as well. This is the illustration we prefer to analyze the jumps in $X^{(1)}$.
Starting with the symmetric illustration \eqref{varHY2}, the same terms appear, but rearranged in a different way and in several addends:
\begin{align*}&\Delta_3^{n_2}X^{(2)}\Big(X_{\tau_+^{(1)}(t_3^{(2)})}^{(1)}-X^{(1)}_{\tau_-^{(1)}(t_2^{(2)})}\Big)+\Delta_4^{n_2}X^{(2)}\Big(X_{\tau_+^{(1)}(t_4^{(2)})}^{(1)}-X^{(1)}_{\tau_-^{(1)}(t_3^{(2)})}\Big)+\Delta_5^{n_2}X^{(2)}\Big(X_{\tau_+^{(1)}(t_5^{(2)})}^{(1)}-X^{(1)}_{\tau_-^{(1)}(t_4^{(2)})}\Big)\\
&\quad=\Delta_3^{n_2}X^{(2)}\Big(\Delta_3^{n_1}X^{(1)}+\Delta_4^{n_1}X^{(1)}\Big)+\Delta_4^{n_2}X^{(2)}\Delta_4^{n_1}X^{(1)}+\Delta_5^{n_2}X^{(2)}\Big(\Delta_4^{n_1}X^{(1)}+\Delta_5^{n_1}X^{(1)}+\Delta_6^{n_1}X^{(1)}\Big).\end{align*}
This illustration simplifies treatment of jumps in $X^{(2)}$. Finally, from the refresh time illustration \eqref{varHY3} we find the same terms in the addends
\begin{align*}&\Big(\Delta_4^{-,n}X^{(1)}+\Delta_4^nX^{(1)}\Big)\Big(\Delta_4^{+,n}X^{(2)}+\Delta_4^nX^{(2)}\Big)+\Delta_3^nX^{(2)}\Big(\Delta_3^{+,n}X^{(1)}+\Delta_3^nX^{(1)}+\Delta_3^{-,n}X^{(1)}\Big)\\
&\quad=\Delta_4^{n_1}X^{(1)}\Big(\Delta_4^{n_2}X^{(2)}+\Delta_5^{n_2}X^{(2)}\Big)+\Delta_3^{n_2}X^{(2)}\Big(\Delta_3^{n_1}X^{(1)}+\Delta_4^{n_1}X^{(1)}\Big).\end{align*}
The effect of a jump is free from the particular illustration. It is convenient to consider the partition $[t_{i-1}^{(1)},t_i^{(1)}),$ $i=1,\ldots m_-^{(1)}(1),$ when we trace out jumps of $X^{(1)}$ and $[t_{j-1}^{(2)},t_j^{(2)}),$ $j=1,\ldots m_-^{(2)}(1),$ for jumps of $X^{(2)}$, while we use $[T_{k-1},T_k),$ $k=1,\ldots, M^n(1),$ for the continuous part. The main reason for the latter is that the estimation error can be written as sum of martingale differences when using refresh times; see \cite{asyn} for details.\qed
\end{exa}
\begin{figure}[t]
\centering
\fbox{\includegraphics[width=16cm]{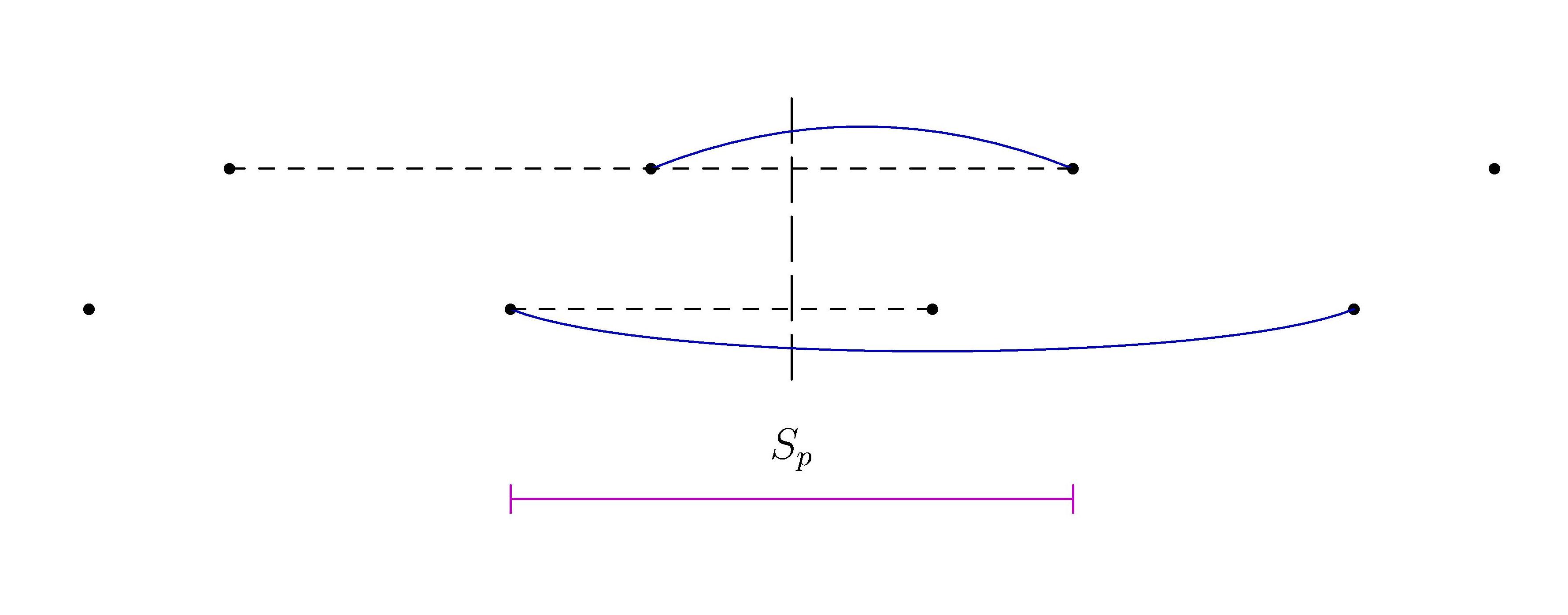}}
\caption{\label{fig2}A co-jump and intervals that determine the covariance structure.}
\end{figure}
\subsection{Asymptotic theory}
Say a co-jump occurs at time $S_p$. As can be seen from representation \eqref{varHY1}, the jump in $X^{(1)}$ is multiplied in the cross term with the increment of $X^{(2)}$ over the interpolated interval $[\tau_{--}^{(2,1)}(S_p),\tau_{++}^{(2,1)}(S_p)]$, and for the jump of $X^{(2)}$ symmetrically. The products are marked in Figure \ref{fig2} by the arcs and dashed segments, respectively. Idiosyncratic jumps are included in the general consideration by setting the jump in one component equal to zero. Similarly to the univariate case, the part due to jumps in the limiting variable is coming from a mixture of the particular jump of one process and the continuous increment of the other. Therefore, quantities like the length of $[\tau_{--}^{(2,1)}(S_p),\tau_{++}^{(2,1)}(S_p)]$ determine the contribution of one particular jump in the asymptotic variance. 

An intriguing effect arises by co-jumps in the multi-dimensional setting induced by the covariance of the two respective cross terms, since $d[X^{(1)},X^{(2)}]_s^c=\sigma_s^{(1)} \sigma_s^{(2)} \rho_s \,ds$. The covariance hinges on the intersection of both interpolated intervals in the two cross terms and results in an auxiliary condition such that the variance of the HY-estimator converges. In Figure \ref{fig2} this intersection is highlighted by the segment with bars. In any case, the following five intervals determine the variance of the HY-estimator by one particular co-jump at $s\in[0,1]$:
\begin{subequations}
\begin{align}\label{rl1}\Big(\mathcal{R}_n^1+\mathcal{L}_n^{1}\Big)(s)&=\max\Big(\tau_+^{(1)}(s),\tau_+^{(2)}(s)\Big)-\min\Big(\tau_-^{(1)}(s),\tau_-^{(2)}(s)\Big),\\
\label{r2}\mathcal{R}_n^2(s)&=\tau_{++}^{(1,2)}(s)-\max\Big(\tau_+^{(1)}(s),\tau_+^{(2)}(s)\Big),\\
\label{r3}\mathcal{R}_n^3(s)&=\tau_{++}^{(2,1)}(s)-\max\Big(\tau_+^{(1)}(s),\tau_+^{(2)}(s)\Big),\\
\label{l2}\mathcal{L}_n^2(s)&=\min\Big(\tau_-^{(1)}(s),\tau_-^{(2)}(s)\Big)-\tau_{--}^{(1,2)}(s),\\
\label{l3}\mathcal{L}_n^3(s)&=\min\Big(\tau_-^{(1)}(s),\tau_-^{(2)}(s)\Big)-\tau_{--}^{(2,1)}(s).
\end{align}
\end{subequations}
Either \eqref{r2} or \eqref{r3} is zero (both only in case of a synchronous observation), and the same is true for \eqref{l2} and \eqref{l3}. Yet, at each jump arrival $S_p$, we need to distinguish if $\mathcal{R}_n^2(S_p)>0$ or $\mathcal{R}_n^3(S_p)>0$. The segment with bars in Figure \ref{fig2} corresponds to $(\mathcal{R}_n^1+\mathcal{L}_n^{1})(S_p)$.

To derive a central limit theorem for the HY-estimator, already in the purely continuous case certain regularity conditions on the sequences of observation times are required; see \cite{hayayosh2011}. The analogous conditions using illustration \eqref{varHY3} from \cite{asyn} and additional conditions that ensure convergence of the variance in the presence of jumps are gathered in the next assumption: 
\begin{ass}\label{schememult}
Assume the $t_i^{(l)},l=1,2$, are stopping times such that $\E[\pi_n^q] = o(n^{-\alpha})$ for all $q \geq 1$ and any $0 < \alpha < q$. 
\begin{itemize}
	\item[(i)] Suppose that the functional sequences
\begin{subequations}
\begin{align}\label{qvt1}G_n(t)=n\sum_{T_k\le t}(\Delta_k^n)^2,\end{align}
\begin{align}\label{qvt2}F_n(t)=n\sum_{T_{k+1}\le t}\left((\Delta_k^n+\Delta_k^{-,n_2})\Delta_k^{+,n_1}+\Delta_k^{+,n_2}(\Delta_k^n+\Delta_k^{-,n_1})+\Delta_{k+1}^n(\Delta_{k+1}^{-,n_1}+\Delta_{k+1}^{-,n_2})\right),\end{align}
\begin{align}\label{qvt3}H_n(t)=n\sum_{T_{k+1}\le t}(\Delta_k^{-,n_1}\Delta_k^{+,n_1}+\Delta_k^{-,n_2}\Delta_k^{+,n_2}),\end{align}
\end{subequations}
converge, i.e.\ satisfy $G_n(t)\rightarrow G(t)$ pointwise for some continuously differentiable limiting function $G$, and analogously for $H_n,F_n$ with limits $H,F$.
\item[(ii)]
 Assume, for any $0 < t \leq 1$ and any $k \in \N$, convergence of
\begin{align} \label{reltermmult}
\int_{[0,t]^k} g(x_1, \ldots, x_k) \E\Big[\prod_{p=1}^k h_p\Big(\Big(n(\mathcal{R}_n^1+\mathcal{L}_n^1),n\mathcal{R}_n^2,n\mathcal{L}_n^2,n\mathcal{R}_n^3,n\mathcal{L}_n^3\Big)(x_p)\Big)\Big] dx_k \ldots dx_1,
\end{align}
with the expressions introduced in \eqref{rl1}--\eqref{l3}, to 
	\begin{align} \label{locexist2mult} 
	\int_{[0,t]^k} g(x_1, \ldots, x_k) \prod_{p=1}^k \int_{\R^5} h_p(y_1,y_2,y_3,y_4,y_5) \mathbf{\Phi}(x_p,d\mathbf{y}) dx_k \ldots dx_1
	\end{align}
for some family of probability measures $\mathbf{\Phi}$ on $[0,\infty)^5$ with finite first moment, holds true as $n\rightarrow\infty$, for all bounded continuous functions $g$ and $h_p$, $p=1, \ldots, k$.
\end{itemize}
\end{ass} 
We now introduce limiting variables for the central limit theorem in the two-dimensional case. They rely on the functions from Assumption \ref{schememult}. Denote
\begin{align*}\tilde {\mathcal{V}}_t= \int_0^tv_s\,dW'_s\end{align*}
with the variance process
\begin{align}\label{vprocess}v_s^2=G^{\prime}(s)\Big(\sigma_s^{(1)}\sigma_s^{(2)}\Big)^2(1+\rho_s^2)+F^{\prime}(s)\Big(\sigma_s^{(1)}\sigma_s^{(2)}\Big)^2+2H^{\prime}(s)\Big(\rho_s\sigma_s^{(1)}\sigma_s^{(2)}\Big)^2.\end{align}
The limit of the cross term is
\begin{align} \tilde{ \mathcal{Z}}_t&=\sum_{p:S_p\le t}\Delta X_{S_p}^{(1)}\sigma_{S_p}^{(2)}\Big(\sqrt{(\mathcal{R}^1+\mathcal{L}^1)(S_p)}U_p^{(1)}+\sqrt{\mathcal{R}^3(S_p)}U_p^{(3)}+\sqrt{\mathcal{L}^3(S_p)}Q_p^{(3)}\Big)\\
&\quad \notag +\Delta X_{S_p}^{(2)}\sigma_{S_p}^{(1)}\Big(\sqrt{(\mathcal{R}^1+\mathcal{L}^1)(S_p)}\Big(\rho_{S_p}U_p^{(1)}+\sqrt{1-\rho_{S_p}^2}Q_p^{(1)}\Big)+\sqrt{\mathcal{R}^2(S_p)}U_p^{(2)}+\sqrt{\mathcal{L}^2(S_p)}Q_p^{(2)}\Big),\end{align}
where $S_p$ denotes some enumeration of all times where at least one process jumps (so certain addends may become zero if a jump is idiosyncratic). Again, we need a second probability space $(\Omega',\mathcal{F}',\PP')$ on which mutually independent standard normal variables \((U_p^{(1)},U_p^{(2)},U_p^{(3)},\) \(Q_p^{(1)},Q_p^{(2)},Q_p^{(3)})\), $p \geq 1$, and  \((\mathcal{R}^1+\mathcal{L}^1,\mathcal{R}^2,\mathcal{L}^2,\mathcal{R}^3,\mathcal{L}^3)(x)\thicksim \mathbf{\Phi}(x,d{\bf{y}})\) for all $x \in [0,t]$ are defined. 

The second and third summand of \eqref{vprocess} give the limiting asymptotic variance of the error due to asynchronicity in the continuous part, i.e.\ the second term comes from the variance of the interpolation steps in the addends of \eqref{varHY3} and does not depend on the correlation whereas the third addend comes from the covariance between successive summands in \eqref{varHY3}. We refer to \cite{asyn} for further details and examples for the asymptotic theory concerning the continuous semimartingale part.
\begin{theo}\label{theo2}
Suppose we observe a two-dimensional It\^{o} semimartingale \eqref{multivX} whose characteristics fulfill the structural Assumption \ref{coeff2}. If the mesh of discrete observations tends to zero in both components, for each $0\le t\le 1$, the HY-estimator consistently estimates the quadratic covariation 
\begin{align}\widehat{\left[ X^{(1)}, X^{(2)}\right]}_t^{(HY),n}\stackrel{\PP}{\longrightarrow}[X^{(1)},X^{(2)}]_t\,.\end{align}
On Assumption \ref{schememult}, an $\mathcal F^0$-stable central limit theorem applies:
\begin{align}\sqrt{n}\widehat{\left[ X^{(1)}, X^{(2)}\right]}_t^{(HY),n}\tols \tilde {\mathcal{V}}_t+\tilde{\mathcal{ Z}}_t\label{cltmulti}\,.\end{align}
\end{theo}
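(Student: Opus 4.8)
The plan is to follow the by-now standard route for stable limit theorems with jumps, adapting the blocking and decomposition techniques of \cite{jacod2008} to the asynchronous HY-setting and building on the purely continuous result of \cite{asyn}. First, a localization argument lets us replace Assumption \ref{coeff2} by the stronger hypothesis that $b_s$, $\sigma_s^{(l)}$, $\rho_s$ and the bound $\gamma$ are bounded and that $\delta$ is bounded; this is routine and does not affect the limit. Consistency then follows from representation \eqref{varHY1}: decomposing each increment into its continuous and its jump part, the continuous--continuous contribution converges to $\int_0^t\rho_s\sigma_s^{(1)}\sigma_s^{(2)}\,ds$ by the overlapping-interval structure and standard properties of the quadratic covariation, the jump--jump contribution converges to $\sum_{s\le t}\Delta X_s^{(1)}\Delta X_s^{(2)}$ since the mesh tends to zero and each co-jump is eventually captured exactly once, while the mixed continuous--jump terms vanish in probability because a continuous increment over a shrinking interpolation window is $o_p(1)$.

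For the central limit theorem I would fix a truncation level $\eps>0$, split the jump measure into the finitely many ``big'' jumps exceeding $\eps$ and a remainder of ``small'' jumps, and write $X=X_0+B+X^c+X(\eps)+X'(\eps)$ with $X(\eps)$ collecting the big jumps. The drift $B$ contributes only $o_p(n^{-1/2})$ and is discarded. The scaled error then decomposes into (a) a purely continuous HY-error, (b) cross terms between the big jumps $X(\eps)$ and the continuous part $X^c$, (c) a big-jump--big-jump term, and (d) all remaining terms involving $X'(\eps)$. Term (a) converges $\f^0$-stably to $\tilde{\mathcal V}_t$ by the continuous theory of \cite{asyn}, which is exactly where Assumption \ref{schememult}(i) and the limits $G,F,H$ enter through \eqref{vprocess}. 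Term (c) equals $\sqrt n$ times the difference between the discretized and the true sum of products of big co-jumps, which is $o_p(1)$ for fixed $\eps$, so it cancels the jump part of the target. Term (d) is shown negligible uniformly in $n$ as $\eps\downarrow0$ by the usual $L^2$ estimates controlling $\int(1\wedge\gamma^2)\,d\lambda$; this standard-but-tedious step permits the final passage $\eps\to0$.

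The heart of the argument is term (b), the big-jump--continuous cross terms, which produces $\tilde{\mathcal Z}_t$. For each big jump at $S_p$, representation \eqref{varHY1} shows that a jump in $X^{(1)}$ is multiplied by the continuous increment of $X^{(2)}$ over the interpolated window $[\tau_{--}^{(2,1)}(S_p),\tau_{++}^{(2,1)}(S_p)]$, and symmetrically for a jump in $X^{(2)}$; the five lengths \eqref{rl1}--\eqref{l3} record exactly how these windows decompose and how they overlap. I would establish a joint stable convergence lemma stating that, conditionally on $\f$ and jointly over the finitely many big jumps, the vector of $\sqrt n$-scaled continuous increments over these windows together with the $n$-scaled interval lengths converges to limits that are independent across $p$, where the lengths have the law $\mathbf{\Phi}(S_p,d\mathbf{y})$ of Assumption \ref{schememult}(ii) and, given those lengths, the increments are centred Gaussian with the induced covariances. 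Decomposing the relevant Brownian increments along the disjoint pieces $\mathcal R^1+\mathcal L^1,\mathcal R^2,\mathcal L^2,\mathcal R^3,\mathcal L^3$ and using the Cholesky form of $\sigma_s$ then yields precisely the six independent normals $U_p^{(\cdot)},Q_p^{(\cdot)}$ of $\tilde{\mathcal Z}_t$: the shared piece $\mathcal R^1+\mathcal L^1$ is the common Brownian increment seen by \emph{both} cross terms at a co-jump, which is why the same $U_p^{(1)}$ appears in both lines and the correlation $\rho_{S_p}$ enters through the second component of $W$.

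The main obstacle is exactly this joint convergence for term (b): one must show (i) asymptotic independence across distinct big jumps, (ii) asymptotic independence of the entire jump block from the continuous block (a), so that the two limits live on the orthogonal factors $\probpo$, and (iii) the precise covariance bookkeeping at a co-jump, where $\mathcal R^2$ versus $\mathcal R^3$ (and $\mathcal L^2$ versus $\mathcal L^3$) may be the active interval while $\mathcal R^1+\mathcal L^1$ is shared. Points (i)--(ii) follow because the windows shrink and, for large $n$, are mutually disjoint and carry Brownian mass independent of the bulk; the standard device is to condition on the jump times and on the $\sigma$-field generated away from the windows, and to apply a conditional stable CLT to the remaining continuous pieces. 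Point (iii) is the genuinely new combinatorial content and is delivered by the interval decomposition encoded in Assumption \ref{schememult}(ii), whose product-of-expectations form in \eqref{locexist2mult} is exactly what produces the cross-jump independence in (i). Assembling (a)--(d), letting $\eps\to0$, and invoking the defining property of stable convergence then gives the joint limit $\tilde{\mathcal V}_t+\tilde{\mathcal Z}_t$ on the extended space.
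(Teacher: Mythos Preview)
Your proposal is correct and follows essentially the same route as the paper: a localization plus the decomposition of $X$ into drift, continuous martingale, big jumps (your $\eps$, the paper's $1/q$) and small jumps, with the continuous HY-limit imported from \cite{asyn}, the cross term handled via a joint stable-convergence lemma for the Brownian increments over the windows \eqref{rl1}--\eqref{l3} in the spirit of Lemma~5.8 of \cite{jacod2008}, and the remaining terms shown to be negligible. The only technical device you do not mention is that the paper additionally introduces a second discretization level $r$, replacing $\sigma$ and $b^q$ by piecewise-constant $\sigma(r)$, $b(q,r)$, which streamlines the conditional-Gaussian computations at the jump windows and the remainder estimates before letting $r\to\infty$ and then $q\to\infty$.
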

\begin{exa} \rm Let us discuss a particular example again. By virtue of the symmetry,
\begin{align*}\mathbb{P}\Big(n\mathcal{L}_n^1(s)\le u\Big)=\begin{cases}1,&u=ns,\\ \mathbb{P}\Big(n\mathcal{R}_n^1(s-u)\le u\Big)&0<u<ns,\end{cases}\end{align*}
when observation times are modeled by renewal processes (if we have time-homogeneity), which means that under high-frequency asymptotics the distributions of backward and forward waiting times are asymptotically equal. Consequently, this is valid for time-homogenous Poisson sampling. Furthermore, by the strong Markov property both the $\mathcal{R}_n$ and $\mathcal{L}_n$, and the $\mathcal{R}_n^1$ and $(\mathcal{R}_n^2,\mathcal{R}_n^3)$ variables are independent. 

Precisely, suppose that $(N^n_l)_t$, $l=1,2,$ are independent Poisson processes with intensities $n\lambda_l$ for $\lambda_l > 0$ fixed and each $n$, and that the observations times are equivalent to the jump times of the respective processes. We find that constantly in time
\begin{align*}\mathbb{P}\Big(\mathcal{R}_n^1\le u_1\Big)= \Big(1-\exp(-n\lambda_1u_1)\Big)\Big(1-\exp(-n\lambda_2u_1)\Big),\end{align*}
since the two Poisson processes are independent. Note further that one of the components of $(\mathcal{R}_n^2,\mathcal{R}_n^3)$ is zero, depending on which process is observed first after time $\max\big(\tau_+^{(1)}(s),\tau_+^{(2)}(s)\big)=r$. The other one follows an exponential distribution then. Precisely, we have 
\begin{align*}\mathbb{P}\Big(\tau_+^{(1)}(r)\ge \tau_+^{(2)}(r)\Big)&=\int_0^{\infty}\int_0^y\lambda_1\exp(-n\lambda_1y)\lambda_2\exp(-n\lambda_2x)\,dx\,dy=\lambda_2(\lambda_1+\lambda_2)^{-1},\end{align*}
thus
\begin{align*}\mathbb{P}\Big(\mathcal{R}_n^2\leq u_2,\mathcal{R}_n^3\leq u_3 \Big)=\lambda_1(\lambda_1+\lambda_2)^{-1}\Big(1-\exp(-n\lambda_1u_2)\Big)+\lambda_2(\lambda_1+\lambda_2)^{-1}\Big(1-\exp(-n\lambda_2u_3)\Big),\end{align*}
from which we obtain
\begin{align*}\mathbb{P}\Big(\mathcal{R}_n^1\le u_1,\mathcal{R}_n^2\le u_2,\mathcal{R}_n^3\le u_3\Big)&=\Big(1-\exp(-n\lambda_1u_1)\Big)\Big(1-\exp(-n\lambda_2u_1)\Big)\\
&\quad\times \Big(1-\lambda_1(\lambda_1+\lambda_2)^{-1}\exp(-n\lambda_1u_2)-\lambda_2(\lambda_1+\lambda_2)^{-1}\exp(-n\lambda_2u_3)\Big).\end{align*}
This joint law enters the random limiting variance of the HY-estimator with jumps for Poisson sampling, that is 
\begin{align*}\mathbb{P}\Big(\mathcal{R}^1\le u_1,\mathcal{R}^2\le u_2,\mathcal{R}^3\le u_3\Big)&=\Big(1-\exp(-\lambda_1u_1)\Big)\Big(1-\exp(-\lambda_2u_1)\Big)\\
&\quad\times \Big(1-\lambda_1(\lambda_1+\lambda_2)^{-1}\exp(-\lambda_1u_2)-\lambda_2(\lambda_1+\lambda_2)^{-1}\exp(-\lambda_2u_3)\Big)\end{align*}
and the vector $(\mathcal{L}^1,\mathcal{L}^2,\mathcal{L}^3)$ follows the same distribution. 

For example, if $\lambda_1=\lambda_2=1$, we obtain \(4\Big(\sigma_{S_p}^{(1)}\Delta X^{(2)}_{S_p}\Big)^2+4\Big(\sigma_{S_p}^{(2)}\Delta X^{(1)}_{S_p}\Big)^2+6\rho_{S_p}\sigma_{S_p}^{(1)}\sigma_{S_p}^{(2)}\Delta X^{(1)}_{S_p}\Delta X^{(2)}_{S_p}\) at each jump time $S_p$ as the conditional expectation of the induced variance of the HY-estimator. In this case the covariation of time functions \eqref{qvt1}-\eqref{qvt3} are $G(t)=(14/9)t,$ $H(t)=(2/9)t,$ $F(t)=(10/9)t;$ for the latter compare \cite{asyn} and \cite{hayayosh2011}. \qed
\end{exa}
An estimator for the asymptotic variance of the HY-estimator facilitating a so-called feasible limit theorem to construct confidence intervals can be deduced similarly as in the continuous framework, pursued in \cite{hayayosh2011} by a kernel-type and \cite{asyn} by a histogram-type approach. See also \cite{veraart2010} for the univariate setting including jumps with regular observations. 

\section{Simulations}\label{sec:5}
In this section we inspect the HY-estimator's finite-sample accuracy and confirm our theoretical findings in a Monte Carlo simulation study. For this purpose, we implement a two-dimensional semimartingale model
with constant drift $b=(0.1,0.1)^{\top}$ and $\sigma^{(1)}=\sigma^{(2)}=1$ and a correlation parameter $\rho$. As a jump component we generate from a conditional Poisson process -- with a fixed number of jumps at times uniformly distributed on $[0,1]$ and jump heights 1, which allows for a simple tracking of the estimator's variance for different parameters and a comparison to the theoretical values. The non-synchronous observation times are randomly generated from two independent homogenous Poisson processes with expected time instants $1/30000$ for each (such that in average $m_-^{(1)}(1)=m_-^{(2)}(1)=30000$ observations on $[0,1]$). \\
In each iteration the observation scheme is newly generated. We run 500 Monte Carlo iterations for each configuration. From the simulated continuous component we evaluate the HY-estimator without jumps and from the whole process the HY estimator with jumps. We consider three simple setups:
\begin{itemize}
\item[{[Sc1]}] One co-jump (size one in both components) occurs.
\item[{[Sc2]}] One co-jump and one idiosyncratic jump in $X^{(1)}$ and one in $X^{(2)}$ (all size one) occur.
\item[{[Sc3]}] Only one idiosyncratic jump in $X^{(1)}$ and one in $X^{(2)}$ (all size one) occur.
\end{itemize}
The results are displayed in Table \ref{tab1}. First of all they confirm that the HY-estimator is eligible to consistently estimate the entire quadratic covariation in each scenario. In particular, the Monte Carlo averages show that the HY-estimator closely tracks the integrated covolatility in Scenario 3, robustly to the simulated large idiosyncratic jumps. We draw a comparison between the empirical finite-sample values and theoretical quantities obtained from the expectation of the random asymptotic variance and the convergence rate. Theoretical expected values are given in parentheses in Table \ref{tab1}. The variances of the continuous part and the variance due to the cross term by jumps and the continuous component are close to their theoretical counterparts. In Scenario 3 empirical variances are slightly larger than expected. The estimator's variance does not depend on the correlation here, since only idiosyncratic jumps occur. This is apparent also for the empirical figures. In Scenarios 1 and 2 one addend of the variance of the cross term increases linearly with the correlation what we witness as well for the finite-sample results.
\newpage
\begin{landscape}
\renewcommand{\arraystretch}{1.75}
\begin{table}
\vspace*{.5cm}
\caption{\label{tab1}Simulation results for different parameters.}
\vspace*{.5cm}
\begin{tabular}{|c|cccc|cc|cc|}
\hline
&\multicolumn{4}{|c|}{[Sc1]}&\multicolumn{2}{|c|}{[Sc2]}&\multicolumn{2}{|c|}{[Sc3]}\\
\hline
$\rho$& mean(HY)& mean(HY(C))& n$\cdot$ var(HY(J))& n$\cdot$ var(HY(C))& mean(HY)&n$\cdot$  var(HY(J))& mean(HY)& n$\cdot$ var(HY(J))\\
\hline
0&  1.000&0.000&7.91 (8.00)&3.87 (4.00)&1.002&15.95 (16.00)& 0.001& 9.03\\
0.1&1.101&0.100&8.49 (8.60)&4.03 (4.03)&1.100&16.02 (16.60)& 0.099& 8.72\\
0.2&1.200&0.200&8.99 (9.20)&4.05 (4.12)&1.199&17.45 (17.20)& 0.200& 8.47\\
0.3&1.300&0.300&9.56 (9.80)&4.40 (4.27)&1.299&17.83 (17.80)&0.297 &8.00\\
0.4&1.401&0.401&10.25 (10.40)&4.50 (4.48)&1.401& 18.43 (18.40)&0.399 &8.35 \\
0.5&1.500&0.499&11.01 (11.00)&4.52 (4.75)&1.500&19.11 (19.00)&0.501 &8.13 \\
0.6&1.599&0.599&11.83 (11.60)&4.78 (5.08)&1.599& 20.21 (19.60)&0.600 &8.54 \\
0.7&1.701&0.700&12.18 (12.20)&5.41 (5.47)&1.701& 20.26 (20.20)& 0.700& 9.08\\
0.8&1.801&0.801&12.74 (12.80)&6.07 (5.92)&1.799& 20.35 (20.80)& 0.803 & 8.44\\
0.9&1.900&0.900&13.14 (13.40)&6.49 (6.43)&1.899&21.08 (21.40)&0.901 &8.21 \\
1.0&2.001&1.001&13.36 (14.00)&6.98 (7.00)&1.999&21.90 (22.00)&0.999 &8.57 \\
\hline
\end{tabular}
\begin{quote}{Note. $\E[m_-^{(1)}(1)]=\E[m_-^{(2)}(1)]=30000$; HY(C) refers to the estimator calculated from the simulated continuous part only; var(HY(J)) is the difference of the total Monte Carlo variance and var(HY(C)); in parentheses theoretical expectations, in [Sc3] this is constantly 8.}\end{quote}
\end{table}
\end{landscape}
\newpage
\section*{Appendix}
\appendix
\section{Preliminaries and some notation \label{app1}}
\setcounter{equation}{0}
\renewcommand{\theequation}{\thesection.\arabic{equation}} 
Throughout the proof $K$ and $K_q$ denote generic constants, the latter dependent on $q$. On the compact time span $[0,1]$, we can reinforce the structural Assumption \ref{coeff} replacing local boundedness by uniform boundedness which precisely means that $b$ and $\sigma$ as well as the jumps of $X$ may be assumed to be bounded by $K$. Such a standard procedure is provided in Section 3.6.3.\ in \cite{jacodlecture}, among others.

The following notation is analogously introduced for the one-dimensional and two-dimensional setting. For any integer $q$ we define the auxiliary (drift) process
\[
b_s^q = b_s - \int (\kappa(\delta(s,z)) - \delta(s,z) 1_{\{\gamma(z) \leq 1/q\}}) \lambda (dz),
\] 
which satisfies $\|b_s^q\| \leq K_q$. We consider discretized versions $b(q,r)_s$ and $\sigma(r)_s$ of $b^q$ and $\sigma$ for some integer $r$.
We set
\[b(q,r)_s=b_{(k-1)/2^r}^q 1_{[(k-1)/2^r,k/2^r)},~k=1,\ldots,2^r,\]
locally constant on intervals $[(k-1)/2^r,k/2^r)$ and analogously for $\sigma(r)$. 
For any fixed pair $(q,r)$, we define
\begin{align*}
&B(q,r)_t  = \int_0^t b(q,r)_s\, ds, \quad B'(q,r)_t =  \int_0^t (b^q_s - b(q,r)_s)\, ds, \\
&C(r)_t = \int_0^t \sigma(r)_s\, dW_s, \quad C'(r)_t = \int_0^t (\sigma_s - \sigma(r)_s) \,dW_s, \\ 
&N(q)_t = \int_0^t \int \delta(s,z) 1_{\{\gamma(z) > 1/q\}} \mu(ds,dz), \quad M(q)_t = \int_0^t \int \delta(s,z) 1_{\{\gamma(z) \leq 1/q\}} (\mu-\nu)(ds,dz). 
\end{align*}
Up to $X_0$, which does not matter in terms of increments anyway, $X_t$ can be written as the sum of the six quantities above. Therefore each increment $\Delta_i^n X$ is the sum of six respective increments as well, and using the multinomial formula we see that $RV_t^n$ (or the HY-estimator) becomes a sum of 21 addends. For the quadratic variation (for dimension one), we have 
\begin{align*}
[X,X]_t = \int_0^t \sigma(r)_s^2 \,ds + \int_0^t (\sigma_s-\sigma(r)_s)^2 \,ds + 2 \int_0^t \sigma(r)_s (\sigma_s-\sigma(r)_s) ds + [N(q),N(q)]_t + [M(q),M(q)]_t
\end{align*}
and analogously for $[X^{(1)},X^{(2)}]$ in the two-dimensional setup.
 Next, we state some standard estimates for the terms from the above decomposition of $X$ which will be used frequently in the analysis below. See e.g.\ Section 4.1\ in \cite{jacod2008}:
\begin{subequations}
 \begin{align}\label{e1}\forall p\ge 1,s,t\ge 0:~\E\left[\|C_{s+t}(r)-C_s(r)\|^p\Big|\mathcal{F}_s\right]\le K_p t^{\nicefrac{p}{2}},\end{align}
 %\begin{align}\label{e1}\forall s,t\ge 0:~\E\left[\|C_{s+t}(r)-C_s(r)\|^2\Big|\mathcal{F}_s\right]\le K t\,,\end{align}
 %\begin{align}\notag \forall p\ge 1,s,t\ge 0:~\E\left[\|M(q)_{s+t}-M(q)_s\|^p\Big|\mathcal{F}_s\right]&\le K_{p}\,\E\left[\left(\int_s^{(s+t)}\int_{\{z\in\R^d|\gamma(z)<q^{-1}\}}\gamma^2(z)\nu(ds,dz)\right)^{\frac{p}{2}}\right]\\ \label{e2}&  \le K_p t^{(\frac{p}{2}\wedge 1)}e_q^{(\frac{p}{2}\wedge 1)}\,,\end{align}
 \begin{align}\notag \forall s,t\ge 0:~\E\left[\|M(q)_{s+t}-M(q)_s\|^2\Big|\mathcal{F}_s\right]&\le K\,\E\left[\left(\int_s^{(s+t)}\int_{\{z\in\R^d|\gamma(z)<q^{-1}\}}\gamma^2(z)\nu(ds,dz)\right)\right]\\ \label{e2}&  \le K te_q,\end{align}
%\begin{align}\notag \forall p\ge 1,s,t\ge 0:~&\E\left[\|C'_{s+t}(r)-C'_s(r)\|^p\Big|\mathcal{F}_s\right]
 %\\ &\le K_p \,\E\left[\left(\int_s^{s+t}\|\sigma(r)_{\tau}-\sigma(r)_s\|^2\,d\tau\right)^{\frac{p}{2}}\Big|\mathcal{F}_s\right]
 %\label{e3} =o \Big( t^{p/2}\Big)\,,\end{align}
 \begin{align}\notag \forall s,t\in [(k-1)2^{-r},k2^{-r})~ \mbox{for some}~k:~\E\left[\|C'_{s+t}(r)-C'_s(r)\|^2\Big|\mathcal{F}_s\right]
  &\le K \,\E\left[\left(\int_s^{s+t}\|\sigma_{\tau}-\sigma_s\|^2\,d\tau\right)\Big|\mathcal{F}_s\right]\\
 \label{e3} &\le K t\sup_{\tau\in[s,t+s]}\|\sigma_{\tau}-\sigma_s\|^2,\end{align}
  %\begin{align}\label{e4}\forall s,t\ge 0:~\E\left[\|N(q)_{s+t}-N(q)_s\|\Big|\mathcal{F}_s\right]\le q\,t\, \int \gamma^2(z)\lambda(dz)\le K_q \,t\,,\end{align}
    \begin{align}\label{e4}\forall s,t\ge 0:~\E\left[\|N(q)_{s+t}-N(q)_s\|\Big|\mathcal{F}_s\right]\le q\,t\, \int \gamma^2(z)\lambda(dz)\le K_q \,t,\end{align}
where $e_q=\int_{\{z\in\R^d|\gamma(z)<q^{-1}\}}\gamma^2(z)\lambda(dz)$. From Assumption \ref{coeff} we may conclude that $\int\gamma^2(z)\lambda(dz)$ is bounded, so by Lebesgue's theorem we have
\begin{align}\label{e5}e_q \rightarrow 0~~\mbox{as}~~q\rightarrow\infty.\end{align}
\end{subequations} 
 
\section{Proof of Theorem \ref{theo1} \label{app2}}
\setcounter{equation}{0}
\renewcommand{\theequation}{\thesection.\arabic{equation}} 
We decompose the left hand side of (\ref{cltuni}) with the terms introduced in Appendix \ref{app1}. The only addends responsible for the limiting variance are 
	\begin{align} \label{step1}
	n^{1/2} \Big[\Big(\sum_{i=1}^{\lfloor nt \rfloor} (\Delta_i^n C(r))^2 -  \int_0^t \sigma(r)_s^2 ds \Big) + 2 \sum_{i=1}^{\lfloor nt \rfloor} \Delta_i^n C(r)\Delta_i^n N(q) \Big]. 
	\end{align}
For the other terms converging to the quadratic variation the approximation errors 
	\begin{align}
	& n^{1/2} \Big[ \Big(\sum_{i=1}^{\lfloor nt \rfloor} (\Delta_i^n C'(r))^2 -  \int_0^t (\sigma_s - \sigma(r)_s)^2 ds\Big) + \Big(\sum_{i=1}^{\lfloor nt \rfloor} (\Delta_i^n N(q))^2 - [N(q),N(q)]_t \Big) \label{step2}\\ \nonumber
	&+2 \Big(\sum_{i=1}^{\lfloor nt \rfloor} \Delta_i^n C(r) \Delta_i^n C'(r) -  \int_0^t \sigma(r)_s(\sigma_s - \sigma(r)_s) ds\Big) + \Big(\sum_{i=1}^{\lfloor nt \rfloor} (\Delta_i^n M(q))^2 - [M(q),M(q)]_t \Big) \Big],
	\end{align}
are proved to be asymptotically negligible.
	The other remainder terms will be shown to be small as well. We state an overview which terms are treated jointly at this point: 
The pure drift parts are 
\begin{align} \label{step3}
n^{1/2} \sum_{i=1}^{\lfloor nt \rfloor} (\Delta_i^n B(q,r) + \Delta_i^n B'(q,r))^2,
\end{align}
and we also treat
\begin{align} \label{step4}
2 n^{1/2} \sum_{i=1}^{\lfloor nt \rfloor} (\Delta_i^n B(q,r) + \Delta_i^n B'(q,r))(\Delta_i^n C(r) + \Delta_i^n N(q))
\end{align}
together. The mixed martingale part is 
\begin{align} \label{step5}
2 n^{1/2} \sum_{i=1}^{\lfloor nt \rfloor} \Delta_i^n M(q) (\Delta_i^n C(r) + \Delta_i^n C'(r)). 
\end{align}
The remainder terms involving $C'(r)$ are now
\begin{align} \label{step6}
2 n^{1/2} \sum_{i=1}^{\lfloor nt \rfloor} \Delta_i^n C'(r) (\Delta_i^n B(q,r) + \Delta_i^n B'(q,r) + \Delta_i^n N(q)),
\end{align} 
and finally the remainder terms involving $M(q)$ become
\begin{align} \label{step7}
2 n^{1/2} \sum_{i=1}^{\lfloor nt \rfloor} \Delta_i^n M(q) (\Delta_i^n B(q,r) + \Delta_i^n B'(q,r) + \Delta_i^n N(q)).
\end{align}
Asymptotics will always work in the sense that we let $n \to \infty$ first, then the auxiliary $r \to \infty$ and finally $q \to \infty$. 
The proof of Theorem \ref{theo1} is divided in two parts which are given in the following propositions:
\begin{prop}\label{propclt}On the assumptions of Theorem \ref{theo1}:
\begin{align}\label{step1prop}(\ref{step1})\tols V_t+Z_t\,.\end{align}
\end{prop}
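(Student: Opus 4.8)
The plan is to split (\ref{step1}) into its two summands,
\[
A^{n,r}_t = n^{1/2}\Big(\sum_{i=1}^{\lfloor nt\rfloor}(\Delta_i^n C(r))^2 - \int_0^t \sigma(r)_s^2\,ds\Big),\qquad B^{n,r,q}_t = 2n^{1/2}\sum_{i=1}^{\lfloor nt\rfloor}\Delta_i^n C(r)\,\Delta_i^n N(q),
\]
to treat each by a stable central limit theorem, to establish their joint convergence with the correct conditional independence, and only at the very end to let $r\to\infty$ and $q\to\infty$ in the prescribed order. The decisive simplification is that $\sigma(r)$ is piecewise constant, so that $C(r)$ is a continuous martingale whose increments are, conditionally on $\mathcal F_{(i-1)/n}$, Gaussian; this makes all the conditional moments appearing below explicit.

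For the continuous part I would apply the stable martingale central limit theorem (see \cite{jacoshir2003}) to the triangular array $\zeta_i^n = n^{1/2}((\Delta_i^n C(r))^2 - c_i^n)$ with $c_i^n = \E[(\Delta_i^n C(r))^2\,|\,\mathcal F_{(i-1)/n}] = \int_{(i-1)/n}^{i/n}\sigma(r)_s^2\,ds$; replacing $\int_0^t\sigma(r)_s^2\,ds$ by $\sum_i c_i^n$ costs only an end term of order $n^{-1/2}$. Using the conditional Gaussianity together with (\ref{e1}) one verifies the standard conditions: the conditional bracket $\sum_i \E[(\zeta_i^n)^2\,|\,\mathcal F_{(i-1)/n}] = \sum_i 2n(c_i^n)^2 \to 2\int_0^t\sigma(r)_s^4\,ds$; the orthogonality relation $\sum_i\E[\zeta_i^n\Delta_i^n W\,|\,\mathcal F_{(i-1)/n}]\to 0$, which holds because a centred square of a Gaussian is uncorrelated with the Gaussian itself, and the analogous relation against any bounded martingale orthogonal to $W$; and a Lyapunov bound again from (\ref{e1}). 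This gives $A^{n,r}\tols \sqrt 2\int_0^t\sigma(r)_s^2\,dW'_s$ with $W'$ a Brownian motion on the extension independent of $\f$.

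For the cross term I would use that $N(q)$ is of finite activity, so it has only finitely many jumps $S_p\le t$ on $[0,t]$, each lying for large $n$ in a distinct interval $I_p = ((i_p-1)/n,i_p/n]$ with $i_p = \lfloor nS_p\rfloor+1$. All remaining increments $\Delta_i^n N(q)$ vanish, and by (\ref{e4}) together with a standard localization the contribution of intervals carrying more than one jump, or a jump too close to $t$, is negligible, so that up to $o_p(1)$
\[
B^{n,r,q}_t = 2\sum_{p:S_p\le t}\Delta_{i_p}^n N(q)\,\big(n^{1/2}\Delta_{i_p}^n C(r)\big).
\]
Here $\Delta_{i_p}^n N(q)\to \Delta N(q)_{S_p}$, while, since $\sigma(r)$ is constant on the shrinking interval $I_p$, $n^{1/2}\Delta_{i_p}^n C(r) = \sigma(r)_{S_p}\,(n^{1/2}\Delta_{i_p}^n W) + o_p(1)$. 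The rescaled Brownian increment over a shrinking interval anchored at a fixed time converges $\f$-stably to a standard normal variable defined on the extension and independent of $\f$, and over the finitely many disjoint intervals $I_p$ these limits are i.i.d.; calling them $U'_p$, one obtains $B^{n,r,q}\tols 2\sum_{p:S_p\le t}\Delta N(q)_{S_p}\,\sigma(r)_{S_p}\,U'_p$.

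The two pieces have to converge jointly, and the heart of the matter is that the limiting Brownian motion $W'$ and the normals $(U'_p)$ are, conditionally on $\f$, independent: $W'$ is built from the bulk of all $\lfloor nt\rfloor$ increments, whereas each $U'_p$ stems from a single shrinking interval. Removing the finitely many intervals $I_p$ alters $A^{n,r}$ only by a term of order $n^{-1/2}$, so that the array producing $A^{n,r}$ is asymptotically independent of the finite family producing $B^{n,r,q}$; I would make this rigorous by showing that the $\f$-conditional characteristic function of the pair factorizes, which is exactly what the joint stable central limit theorem yields. Passing finally to the iterated limits, as $r\to\infty$ one has $\sigma(r)_s\to\sigma_s$ in $L^2(ds)$ and, by continuity of $\sigma$, pointwise at each $S_p$, so the first term tends to $V_t$ in $L^2(\WP)$ and $\sigma(r)_{S_p}\to\sigma_{S_p}$; as $q\to\infty$, $N(q)$ exhausts the jumps of $X$ and $\Delta N(q)_{S_p}\to\Delta X_{S_p}$ with the jump set increasing to all jumps of $X$, so the second term tends to $Z_t$. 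I expect the main obstacle to be precisely this last step: verifying the joint stable convergence with \emph{independent} $W'$ and $(U'_p)$, and controlling the errors uniformly in $n$ so that the limits $r\to\infty$ and $q\to\infty$ may be interchanged with the stable convergence, which is where (\ref{e5}) and the uniform moment bounds are essential.
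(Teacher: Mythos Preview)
Your outline is correct and matches the paper's route step for step: martingale CLT for $A^{n,r}$ via the standard conditions of Theorem~IX.7.28 in \cite{jacoshir2003}, finite-activity decomposition of $B^{n,r,q}$ into a finite sum over jump times, joint stable convergence with $W'$ and $(U'_p)$ $\f$-conditionally independent, and finally the iterated limits $r\to\infty$, $q\to\infty$. For the joint convergence---the step you rightly flag as the main obstacle---the paper does not work with the shrinking intervals $I_p$ directly but instead follows Lemma~5.8 of \cite{jacod2008}: one surrounds each $S_p$ by a \emph{fixed}-width window $[S_p-1/\ell,S_p+1/\ell]$, removes these windows from $A^{n,r}$, and enlarges the filtration so that the Brownian path on the windows and the jump times become $\mathcal F_0$-measurable; under the regular conditional law the remaining increments are still Gaussian and the $\alpha(n,p)$ factor out, after which one lets $\ell\to\infty$. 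This is exactly the mechanism that makes your ``conditional characteristic function factorizes'' argument rigorous when the $S_p$ are random.
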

%Formally, this means that we prove
\begin{prop}\label{propremainder} On the assumptions of Theorem \ref{theo1}:
\begin{align*}
\lim_{q \to \infty} \limsup_{r \to \infty} \limsup_{n \to \infty} \PP(|(\ref{step2}) + (\ref{step3}) + (\ref{step4}) + (\ref{step5}) + (\ref{step6}) + (\ref{step7})| > \epsilon) = 0
\end{align*}
for any $\epsilon > 0$.
\end{prop}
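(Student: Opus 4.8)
The plan is to exploit linearity: since \eqref{step2}--\eqref{step7} is a finite sum, it suffices to establish the stated iterated limit for each group separately, always in the prescribed order, letting $n\to\infty$, then $r\to\infty$, then $q\to\infty$. Two ingredients do all the work. The first is the collection of conditional moment bounds \eqref{e1}--\eqref{e4} together with the regularity of the equidistant grid: there are $\lfloor nt\rfloor$ increments, each over an interval $I_i=((i-1)/n,i/n]$ of length $1/n$. The second is the orthogonality built into the decomposition $X=X_0+B(q,r)+B'(q,r)+C(r)+C'(r)+M(q)+N(q)$: conditionally on $\mathcal{F}_{(i-1)/n}$, the continuous martingale increment $\Delta_i^n(C(r)+C'(r))$ has mean zero and, because $C$ is continuous while $M(q)$ is purely discontinuous, is orthogonal to $\Delta_i^n M(q)$ (so $[M(q),C]=0$); moreover $M(q)$ and $N(q)$ are driven by the disjoint regions $\{\gamma\le 1/q\}$ and $\{\gamma>1/q\}$ of the same Poisson measure, hence are independent and a.s.\ never jump simultaneously. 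Taking $n\to\infty$ before $q\to\infty$ guarantees that for every fixed $q$ the finite-activity process $N(q)$ has only finitely many jumps on $[0,t]$, each eventually isolated in its own $I_i$.

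For the drift contributions I would only use crude size bounds. Since $|\Delta_i^n B(q,r)|,|\Delta_i^n B'(q,r)|\le K_q/n$, the pure-drift group \eqref{step3} is bounded by $n^{1/2}\lfloor nt\rfloor K_q^2 n^{-2}=O(K_q^2 n^{-1/2})$, negligible for fixed $(q,r)$. In \eqref{step4} and the drift parts of \eqref{step6}--\eqref{step7} the drift increment is $\mathcal{F}_{(i-1)/n}$-measurable up to boundary effects, so products such as $\Delta_i^n B\,\Delta_i^n C$ and $\Delta_i^n B\,\Delta_i^n M$ are martingale differences; bounding the second moment by $n\sum_i\E[(\Delta_i^n B)^2(\Delta_i^n\cdot)^2]$ and inserting $(\Delta_i^n B)^2\le K_q^2 n^{-2}$ together with \eqref{e1}, \eqref{e2}, \eqref{e3} gives a bound of order $K_q^2 n^{-1}$, again vanishing with $n$. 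The continuous approximation errors in \eqref{step2} involving $C'(r)$, namely $\sum_i(\Delta_i^n C'(r))^2-\int_0^t(\sigma_s-\sigma(r)_s)^2\,ds$ and $\sum_i\Delta_i^n C(r)\Delta_i^n C'(r)-\int_0^t\sigma(r)_s(\sigma_s-\sigma(r)_s)\,ds$, are treated in the same martingale-difference fashion, their second moments controlled by \eqref{e1} and \eqref{e3}; these vanish once $r\to\infty$, because $\sigma(r)\to\sigma$ and hence the modulus-of-continuity factor in \eqref{e3} tends to zero.

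The jump self-terms and the genuine martingale cross terms require orthogonality rather than size. For \eqref{step5} I would use $[M(q),C]=0$, which makes $\Delta_i^n M(q)\,\Delta_i^n(C(r)+C'(r))$ martingale differences needing no centering; conditional factorization of the Brownian and jump increments together with \eqref{e1}--\eqref{e3} bounds the rescaled second moment by a constant times $e_q\,t$ (plus a modulus-of-continuity factor for the $C'(r)$ piece), which vanishes as $q\to\infty$ by \eqref{e5} and $r\to\infty$. For the small-jump self-term $\sum_i(\Delta_i^n M(q))^2-[M(q),M(q)]_t$ in \eqref{step2} the crucial point is to center by the \emph{realized} (optional) quadratic variation $[M(q),M(q)]_t$ rather than by the predictable one: with this centering the summands $(\Delta_i^n M(q))^2-\sum_{s\in I_i}(\Delta M(q)_s)^2$ are again martingale differences whose fluctuation comes only from two distinct small jumps falling in the same interval, so their conditional second moment is of order $(e_q/n)^2$; summing and rescaling yields a bound of order $e_q^2\,t$ that vanishes as $q\to\infty$. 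The term $\sum_i(\Delta_i^n N(q))^2-[N(q),N(q)]_t$ is exactly zero once $n$ is large enough that the finitely many jumps of $N(q)$ are isolated.

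The main obstacle is the cross term $\sum_i\Delta_i^n M(q)\,\Delta_i^n N(q)$ in \eqref{step7}: a Cauchy--Schwarz bound fails, since $\sum_i(\Delta_i^n N(q))^2=O_p(1)$ and $\sum_i(\Delta_i^n M(q))^2=O_p(e_q)$ only yield $n^{1/2}O_p(e_q^{1/2})$, which explodes with $n$. The resolution uses the disjoint supports of $M(q)$ and $N(q)$. I would condition on the whole trajectory of $N(q)$; since $M(q)$ is a martingale independent of $N(q)$, its increments over the (now fixed) disjoint intervals are orthogonal, so the conditional second moment collapses to $n\sum_i(\Delta_i^n N(q))^2\,\E[(\Delta_i^n M(q))^2\mid N(q)]\le K e_q\sum_i(\Delta_i^n N(q))^2$ by \eqref{e2}. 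Taking expectations and using $\E[\sum_i(\Delta_i^n N(q))^2]\le t\int\gamma^2(z)\lambda(dz)\le Kt$, which is bounded uniformly in $q$ (up to a term vanishing as $n\to\infty$), gives an overall bound of order $e_q\,t$, uniform in $n$, that vanishes as $q\to\infty$ by \eqref{e5}; a completely analogous independence-plus-continuity argument disposes of the remaining $\Delta_i^n C'(r)\,\Delta_i^n N(q)$ term in \eqref{step6} and of $\Delta_i^n M(q)\,\Delta_i^n N(q)$ against the drift. The delicate points throughout are thus the correct choice of centering for the self-terms and the systematic use of martingale and independence orthogonality to avoid lossy Cauchy--Schwarz estimates; respecting the order of the three limits is precisely what keeps every intermediate bound finite.
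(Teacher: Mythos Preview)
Your overall plan---split into the six groups and bound each separately---matches the paper, and the bounds for \eqref{step3}, the $(\Delta_i^n M(q))^2$ and $(\Delta_i^n N(q))^2$ self-terms in \eqref{step2}, and the $C'(r)$ approximation errors are essentially correct. Two arguments, however, do not go through as written.

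The first concerns the drift cross term in \eqref{step4}. You claim the drift increment is $\mathcal F_{(i-1)/n}$-measurable ``up to boundary effects'' and then run a martingale-difference second-moment bound. This is true for $B(q,r)$, whose integrand is piecewise constant on the $2^{-r}$-grid, but false for $B'(q,r)=\int(b^q_s-b(q,r)_s)\,ds$: its integrand is genuinely adapted, not measurable at the left endpoint. With only the crude bound $|\Delta_i^n B'(q,r)|\le K_q/n$ one gets at best
\[
n^{1/2}\sum_i\E\big|\Delta_i^n B'(q,r)\,\Delta_i^n C(r)\big|\le K_qK,
\]
which is $O(1)$ and never vanishes. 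The whole purpose of the $B/B'$ split is that $B(q,r)$ carries predictability while $B'(q,r)$ carries smallness via the modulus of continuity: one must use $|b^q_s-b(q,r)_s|\le w(b^q,2^{-r})$, which yields $K\,\E[w(b^q,2^{-r})^2]^{1/2}\to0$ as $r\to\infty$---exactly the mechanism you correctly invoke for the $C'(r)$ terms, but omit here.

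The second is your repeated reliance on independence of the Brownian and jump parts: ``conditional factorization'' in \eqref{step5}, conditioning on $N(q)$ in \eqref{step7}, and ``independence-plus-continuity'' for $\Delta_i^n C'(r)\,\Delta_i^n N(q)$ in \eqref{step6}. Under Assumption~\ref{coeff} the coefficients $\sigma$ and $\delta$ are merely adapted; hence $C'(r)=\int(\sigma_s-\sigma(r)_s)\,dW_s$ and $M(q)=\int\!\!\int\delta\,1_{\{\gamma\le1/q\}}(\mu-\nu)$ need not be independent of $N(q)$ even when $W\perp\mu$. The paper therefore avoids independence altogether. For \eqref{step5} and \eqref{step7} it uses integration by parts to write each product as two stochastic integrals and bounds each by Burkholder--Davis--Gundy, picking up the factor $e_q$ directly (for \eqref{step7} the key is that $M(q)\widetilde M(q)$ is a martingale because the two have no common jumps, not because they are independent). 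For the delicate $C'(r)\times N(q)$ piece in \eqref{step6} it compensates $N(q)$, integrates by parts, and then controls
\[
n^{1/2}\sum_i\int_{(i-1)/n}^{i/n}(N(q)_{s-}-N(q)_{(i-1)/n})(\sigma_s-\sigma(r)_s)\,dW_s
\]
by summing over the finitely many jump times $S_p$ and applying the strong Markov property of $W$ at each $S_p$ to obtain $\E[|\int_{S_p}^{\lceil nS_p\rceil/n}(\sigma_s-\sigma(r)_s)\,dW_s|\mid\mathcal F_{S_p}]\le Kn^{-1/2}\E[w(\sigma,2^{-r})^2\mid\mathcal F_{S_p}]^{1/2}$. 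It is conditional moment control at the random jump times, not process-level independence, that closes these terms.
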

In this section, we show the claim of the latter proposition only. We will prove an analogous claim for Theorem \ref{theo1b} later, which includes Proposition \ref{propclt} as a special case. To establish Proposition \ref{propremainder} we show that it holds for each of the remainder terms (\ref{step2}) to (\ref{step7}) separately, which implies our claim. Note by Markov inequality that it is sufficient to obtain bounds for moments of the respective terms.

\begin{proof}[Proof of Proposition \ref{propremainder}]\hfill\\
\noindent
$\bullet ~~~\lim_{q \to \infty} \limsup_{r \to \infty} \limsup_{n \to \infty} \PP\left(|\eqref{step3}|>\epsilon \right)=0~\text{for any}~\epsilon>0$:\\
Since we have $|\Delta_i^n B(q,r)| \leq K_q /n$ and also for $\Delta_i^n B'(q,r)$
\[n^{1/2} \sum_{i=1}^{\lfloor nt \rfloor} (\Delta_i^n B(q,r) + \Delta_i^n B'(q,r))^2 \stackrel{\PP}{\longrightarrow} 0~\text{for any fixed $q$ and $r$.}\]
$\bullet ~~~\lim_{q \to \infty} \limsup_{r \to \infty} \limsup_{n \to \infty} \PP\left(|\eqref{step4}|>\epsilon \right)=0~\text{for any}~\epsilon>0$:
\[n^{1/2} \sum_{i=1}^{\lfloor nt \rfloor} \Delta_i^n B(q,r) \Delta_i^n C(r) = n^{-1/2} \sum_{k=1}^{\lfloor 2^rt\rfloor} b^q_{(k-1)2^{-r}} (C(r)_{k2^{-r}} - C(r)_{(k-1)2^{-r}}) + R(q,r)_t^n,
\]
where the error due to increments over intervals with $(i-1)/n \leq (k-1)2^{-r} < i/n$ and boundary effects is denoted by $R(q,r)_t^n$ and satisfies $\E[|R(q,r)_t^n|] \leq K_r n^{-1/2}$, thus becomes small. The first term on the right hand side above is a sum of martingale differences. Therefore
\begin{align*}n^{-1} \E \left(\sum_{k=1}^{\lfloor 2^rt\rfloor} b^q_{(k-1)2^{-r}} (C(r)_{k2^{-r}} - C(r)_{(k-1)2^{-r}})\right)^2 &\leq K/n \sum_{k=1}^{\lfloor 2^rt\rfloor}  \E \left(C(r)_{k2^{-r}} - C(r)_{(k-1)2^{-r}}\right)^2 \\ & \leq K /n \to 0.
\end{align*} 
For the treatment of the analogous term involving $\Delta_i^n B'(q,r)$ let $w(f,h) = \sup\{|f(s) - f(t)|; |s-t| \leq h\}$ denote the modulus of continuity of a function $f$. Several applications of Cauchy-Schwarz inequality and It\^o isometry  give
\begin{align*}
n^{1/2} \sum_{i=1}^{\lfloor nt \rfloor} \E |\Delta_i^n B'(q,r) \Delta_i^n C(r)| \leq n^{-1/2} \sum_{i=1}^{\lfloor nt \rfloor} \E |w(b^q,2^{-r})\Delta_i^n C(r)| \leq K \E[|w(b^q,2^{-r})|^2]^{1/2}.
\end{align*}
By construction, $b^q$ is bounded and continuous. Therefore, $w(b^q,2^{-r})$ converges to zero for each $\omega$ as $r \to \infty$, and the entire term becomes small due to Lebesgue's theorem. The claim for (\ref{step4}) now follows using the bounds on the drift and \eqref{e4}. \\
$\bullet ~~~\lim_{q \to \infty} \limsup_{r \to \infty} \limsup_{n \to \infty} \PP\left(|\eqref{step5}|>\epsilon \right)=0~\text{for any}~\epsilon>0$:\\
Integration by parts formula gives
\begin{align} \label{XM} \nonumber
\Delta_i^n C(r) \Delta_i^n M(q) =& \int_{(i-1)/n}^{i /n} (C(r)_{s-}-C(r)_{(i-1)/n})\, dM(q)_s \\ &+ \int_{(i-1)/n}^{i /n} (M(q)_{s}-M(q)_{(i-1)/n}) \,dC(r)_s,
\end{align}
with expectation zero. Applying Burkholder-Davis-Gundy inequality yields
\allowdisplaybreaks[3]{\begin{align} \nonumber
&\E \Big(\int_{(i-1)/n}^{i /n} (C(r)_{s-}-C(r)_{(i-1)/n})\, dM(q)_s\Big)^2  \leq K\E\int_{(i-1)/n}^{i /n} (C(r)_{s-}-C(r)_{(i-1)/n})^2 \,d[M(q),M(q)]_s \nonumber \\ \nonumber \leq& K\E\int_{(i-1)/n}^{i /n} \int (C(r)_{s-}-C(r)_{(i-1)/n})^2 \delta^2(s,z) 1_{\{\gamma(z) \leq 1/q\}} \lambda(dz) \,ds \\ \label{marting} \leq& K e_q \Big(\int_{(i-1)/n}^{i /n} \E (C(r)_{s-}-C(r)_{(i-1)/n})^2 ds \Big) \leq K e_q /n^2\,.
\end{align}}
%with $e_q = \int \gamma^2(z) 1_{\{\gamma(z) \leq 1/q\}} \lambda(dz)$. 
Similarly, we obtain
\[\E \Big(\int_{(i-1)/n}^{i /n} (M(q)_{s}-M(q)_{(i-1)/n})\, dC(r)_s\Big)^2 \leq K e_q /n^2. \]
and altogether 
\[n \E\Big(\sum_{i=1}^{\lfloor nt \rfloor} \Delta_i^n M(q) \Delta_i^n C(r)\Big)^2 \leq n  \sum_{i=1}^{\lfloor nt \rfloor} \E\Big(\Delta_i^n M(q) \Delta_i^n C(r)\Big)^2 \leq K e_q,\]
and the same bound holds for the term involving $\Delta_i^n C'(r)$. Our claim now follows by virtue of \eqref{e5}.
$\bullet ~~~\lim_{q \to \infty} \limsup_{r \to \infty} \limsup_{n \to \infty} \PP\left(|\eqref{step6}|>\epsilon \right)=0~\text{for any}~\epsilon>0$:
\begin{align*}& \Delta_i^n B(q,r) + \Delta_i^n B'(q,r) + \Delta_i^n N(q) = \int_{(i-1)/n}^{i/n} b_s^q ds + \int_{(i-1)/n}^{i/n} \int \delta(s,z) 1_{\{\gamma(z) > 1/q\}} \mu(ds,dz) 
%\\ =& \int_{(i-1)/n}^{i/n} \Big(b_s^q + \int \delta(s,z) 1_{\{\gamma(z) > 1/q\}} \lambda(dz)\Big) ds + \int_{(i-1)/n}^{i/n}\int \delta(s,z) 1_{\{\gamma(z) > 1/q\}} (\mu-\nu)(ds,dz) 
\\ =& \underbrace{\int_{(i-1)/n}^{i/n} \Big(b_s + \int \kappa'(\delta(s,z)) \lambda(dz)\Big) ds}_{=\Delta_i^n\widetilde B} +\underbrace{ \int_{(i-1)/n}^{i/n} \int \delta(s,z) 1_{\{\gamma(z) > 1/q\}} (\mu-\nu)(ds,dz)}_{=\Delta_i^n\widetilde M(q)} . 
\end{align*} 
By two applications of Cauchy-Schwarz inequality and It\^o isometry, we conclude
\begin{align*}n^{1/2} \sum_{i=1}^{\lfloor nt \rfloor} \E \Big[\Big|\Delta_i^n C'(r) \Delta_i^n\widetilde B\Big| \Big] \leq K n^{-1/2}\sum_{i=1}^{\lfloor nt \rfloor} \E [|\Delta_i^n C'(r)|^2]^{1/2} \leq K \E[|w(\sigma^2,2^{-r})|^2]^{1/2}.\end{align*}
Convergence to zero as $r \to \infty$ can be deduced from Lebesgue's theorem again. On the other hand, we have a similar decomposition for $\Delta_i^n C'(r) \Delta_i^n\widetilde M(q)$ as in (\ref{XM}). The arguments from (\ref{marting}) yield
\[
n \E \Big( \sum_{i=1}^{\lfloor nt \rfloor}\int_{(i-1)/n}^{i /n} (C'(r)_{s-}-C'(r)_{(i-1)/n})\, d\widetilde M(q)_s\Big)^2 \leq K \E[|w(\sigma^2,2^{-r})|^2],
\]
so it remains to focus on 
\[
n^{1/2} \sum_{i=1}^{\lfloor nt \rfloor}\int_{(i-1)/n}^{i /n} (\widetilde M(q)_{s-}-\widetilde M(q)_{(i-1)/n}) (\sigma_s - \sigma(r)_s) \,dW_s.
\]
We have that $\Delta_i^n\widetilde  M(q) = \Delta_i^n N(q)-\int_{(i-1)/n}^{i/n}\int \delta(s,z) 1_{\{\gamma(z)>q^{-1}\}}\lambda(dz) ds$, where the absolute value of the second addend is bounded by $K_q/n$. Since $\sigma_s - \sigma(r)_s \to 0$ pointwise for $r \to \infty$, all we need to discuss is 
\[n^{1/2} \sum_{i=1}^{\lfloor nt \rfloor}\int_{(i-1)/n}^{i /n} (N(q)_{s-}-N(q)_{(i-1)/n}) (\sigma_s - \sigma(r)_s)\, dW_s.\]
Let $S_1, S_2, \ldots$ be a sequence of stopping times exhausting the jumps of $N(q)$. Then 
\begin{align} \label{summe}
& n^{1/2} \sum_{i=1}^{\lfloor nt \rfloor} \E \Big| \int_{(i-1)/n}^{i /n} (N(q)_{s-}-N(q)_{(i-1)/n}) (\sigma_s - \sigma(r)_s)\, dW_s  \Big| \\ \nonumber \leq&\, n^{1/2} \sum_{p\geq 1} \sum_{i=1}^{\lfloor nt \rfloor} \E \left|1_{\{(i-1)/n < S_p \leq i/n\}} |N(q)_{S_p}-N(q)_{S_p-}|   \int_{S_p}^{\left\lceil nS_p\right\rceil/n} (\sigma_s - \sigma(r)_s)\, dW_s  \right|.
\end{align}
Using the strong Markov property of Brownian motion we obtain
\begin{align*}
\E \Big[\Big|\int_{S_p}^{\left\lceil nS_p\right\rceil/n} (\sigma_s - \sigma(r)_s) \,dW_s \Big| \Big| \f_{S_p}\Big] \leq& \Big( \int_{S_p}^{\left\lceil nS_p\right\rceil/n} \E[(\sigma_s - \sigma(r)_s)^2 | \f_{S_p}] ds \Big)^{1/2} \\ \leq& K n^{-1/2} \E[w(\sigma,2^{-r})^2 | \f_{S_p}]^{1/2}.
\end{align*}
Therefore, using Cauchy-Schwarz inequality several times, (\ref{summe}) can be bounded by a constant times
\begin{align} \nonumber
&\E \sum_{p\geq 1} \sum_{i=1}^{\lfloor nt \rfloor}  \left(1_{\{(i-1)/n < S_p \leq i/n\}} 1_{\{S_p \leq 1\}} \E[w(\sigma,2^{-r})^2 | \f_{S_p}]^{1/2} \right) \\ \leq&\,
\E\Big[\sum_{p \geq 1} \Big(\sum_{i=1}^{\lfloor nt \rfloor} 1_{\{(i-1)/n < S_p \leq i/n\}} \Big)^2 \Big]^{1/2} \E\Big[\sum_{p \geq 1} 1_{\{S_p \leq 1\}} \E[w(\sigma,2^{-r})^2 | \f_{S_p}] \Big]^{1/2}. \label{anders}
\end{align}
The first factor is bounded by $\E[\Pi(q)]^{1/2}$, where $\Pi(q) = \int_0^1 \int 1_{\{\gamma(z) > 1/q\}} \mu(ds,dz)$ denotes the number of large jumps on $[0,1]$. It holds that
\begin{align*}
\E[\left(\Pi(q)\right)^2] \leq& K \E\Big[ \Big(\int_0^1 \int 1_{\{\gamma(z) > 1/q\}} (\mu-\nu)(ds,dz) \Big)^2 + \Big(\int_0^1 \int 1_{\{\gamma(z) > 1/q\}} \lambda(dz) ds \Big)^2 \Big] \\ \leq& K \Big(\int_0^1 \int 1_{\{\gamma(z) > 1/q\}} \lambda(dz) ds + \Big(\int_0^1 \int 1_{\{\gamma(z) > 1/q\}} \lambda(dz) ds \Big)^2 \Big) \leq K_q
\end{align*}
by the integrability assumption on $\gamma^2$. It remains to focus on the second factor, for which
\begin{align*}
\sum_{p\geq 1} \E[1_{\{ S_p \leq 1\}} \E[w(\sigma,2^{-r})^2 | \f_{S_p}]^{1/2}] \leq& \sum_{p\geq 1} \E[1_{\{ S_p \leq 1\}}]^{3/4} \E(\E[w(\sigma,2^{-r})^2 | \f_{S_p}]^{2})^{1/4} \\ \leq& \E[w(\sigma,2^{-r})^4]^{1/4} \sum_{p\geq 1} \PP(S_p \leq 1)^{3/4}.
\end{align*}
using Hölder and Jensen inequality, respectively. Finally, 
\[
\sum_{p\geq 1} \PP(S_p \leq 1)^{3/4} = \sum_{p\geq 1} \PP(\Pi(q) > p)^{3/4} \leq (\E[(\Pi(q))^2])^{3/4}  \sum_{p\geq 1} p^{-3/2} \leq K_q.
\]
Continuity of $\sigma$ gives the claim again. \\
$\bullet ~~~\lim_{q \to \infty} \limsup_{r \to \infty} \limsup_{n \to \infty} \PP\left(|\eqref{step7}|>\epsilon \right)=0~\text{for any}~\epsilon>0$:\\
With the previous notation we have
\[
n^{1/2} \sum_{i=1}^{\lfloor nt \rfloor} \E \Big[\Big|\Delta_i^n M(q) \Delta_i^n \widetilde B\Big| \Big] \leq K n^{-1/2} \sum_{i=1}^{\lfloor nt \rfloor} \E [(\Delta_i^n M(q))^2]^{1/2} \leq K e_q^{1/2} \to 0
\]
as $q \to \infty$. The product $M(q)\widetilde M(q)$ is a martingale (no common jumps) and we may proceed as in the proof for (\ref{step5}). In particular
\begin{align} \nonumber
&\E \Big(\int_{(i-1)/n}^{i /n} \hspace*{-.1cm} (M(q)_{s-}-M(q)_{(i-1)/n})\, d\widetilde M(q)_s\Big)^2 \hspace*{-.1cm} \leq K\hspace*{-.05cm}\int_{(i-1)/n}^{i /n} \hspace*{-.1cm} \E(M(q)_{s-}-M(q)_{(i-1)/n})^2 \Big(\int \gamma^2(z) \lambda(dz)\Big) \,ds \nonumber \\ \leq& K\int_{(i-1)/n}^{i /n} \int_{(i-1)/n}^s \Big(\int \gamma^2(z) 1_{\{\gamma(z) \leq 1/q\}} \lambda(dz) \Big) \Big(\int \gamma^2(z) \lambda(dz)\Big) du ds \leq e_q K /n^2 . \label{twomartin}
\end{align}
Changing the roles of $M(q)$ and $\widetilde M(q)$ then gives the result. \\
$\bullet ~~~\lim_{q \to \infty} \limsup_{r \to \infty} \limsup_{n \to \infty} \PP\left(|\eqref{step2}|>\epsilon \right)=0~\text{for any}~\epsilon>0$:\\
We may replace $[X,X]_t$ with $[X,X]_{\lfloor nt \rfloor /n}$, since 
\[n^{1/2} \E([X,X]_t-[X,X]_{\lfloor nt \rfloor /n}) \leq K n^{-1/2}\,.\]
Now, let $\Omega(q,r,n,t)$ denote the set on which each interval $ [i/n,(i+1)/n]$ contains either one or no jump and no jump occurs on those intervals with $(i-1)/n \leq (k-1)2^{-r} < k2^{-r} \leq i/n$, $k=1, \ldots, 2^r$, and also not on $[\lfloor nt \rfloor /n,t]$.
%\begin{align}\label{omega}\Omega(q,r,n,t)=\Big\{\omega\in\Omega|\sup_{i\in{\{1,\ldots,n\}}/I}\left|\{S_p\in[i/n,(i+1)/n]||X_{S_p}-X_{S_p-}|>0\}\right|=1 ~\wedge  \\
% ~\notag\sup_{i\in I}\left|\{S_p\in[i/n,(i+1)/n]|X_{S_p}-X_{S_p-}>0\}\right|=0\Big\}\end{align}
%with $I=\{i|i>(k-1)2^{-r}/n^{-1}\ge i-1\}\cup \{\lfloor nt\rfloor\}$.
%For fixed $q, r, t$, $\Omega(q,r,n,t)\rightarrow \Omega$ as $n \to \infty$ which is why we may assume $\omega$ to live on this set. 
This is helpful, since 
\[n^{1/2} \Big(\sum_{i=1}^{\lfloor nt \rfloor} (\Delta_i^n N(q))^2 - [N(q),N(q)]_t \Big) = 0\]
identically on $\Omega(q,r,n,t)$. For fixed $q, r, t$, we have $\Omega(q,r,n,t)\rightarrow \Omega$ as $n \to \infty$ which is why we may assume $\omega$ to live on this set. (Note that we do not need the condition involving $r$ at this point, but the previously introduced set will be used several times later again.) Proving that
\[n^{1/2} \Big(\sum_{i=1}^{\lfloor nt \rfloor} (\Delta_i^n M(q))^2 - [M(q),M(q)]_t \Big) = 2 n^{1/2} \sum_{i=1}^{\lfloor nt \rfloor} \int_{(i-1)/n}^{i /n} (M(q)_{s-}-M(q)_{(i-1)/n})\, dM(q)_s\]
becomes small works similarly to (\ref{twomartin}). For the remaining terms in (\ref{step2}) let us exemplarily discuss 
\[
n^{1/2}\Big(\sum_{i=1}^{\lfloor nt \rfloor} (\Delta_i^n C'(r))^2 -  \int_0^t (\sigma_s - \sigma(r)_s)^2 ds\Big) = 2 n^{1/2} \sum_{i=1}^{\lfloor nt \rfloor} \int_{(i-1)/n}^{i /n} (C'(r)_{s}-C'(r)_{(i-1)/n})\, dC'(r)_s
\]
which is again a sum of martingale increments. Several applications of Burkholder-Davis-Gundy inequality and Cauchy-Schwarz inequality prove that the expectation of its square is bounded by $\E[w(\sigma,2^{-r})^4]^{1/2}$ which converges to zero as $r\rightarrow \infty$. A similar argument applies to the cross term involving $C(r)$ and $C'(r)$. This completes the proof of Proposition \ref{propremainder}.
\end{proof}

\section{Proof of Theorem \ref{theo1b} \label{app2b}}
\setcounter{equation}{0}
\renewcommand{\theequation}{\thesection.\arabic{equation}} 
We have the same decomposition for $n^{1/2} (RV_t^n - [X,X]_t)$ as in Section \ref{app2}, where the only difference is that sums run to $m_-^n(t)$ rather than $\lfloor nt \rfloor$. Therefore, we have to prove similar claims as Proposition \ref{propclt} and Proposition \ref{propremainder}, and the latter works in pretty much the same way as before, up to using Assumption \ref{scheme} in some places: To be precise, all claims follow by simply using \eqref{Hexist} where necessary, apart from the first term in \eqref{anders} which becomes the square root of
\begin{align*}
\E\Big[\sum_{p\geq 1} \Big(\sum_{i=1}^{m_-^n(t)} 1_{\{t_{i-1}^n \leq S_p \leq t_i^n\}} n^{1/2} (t_i^n - t_{i-1}^n)^{1/2}\Big)^2\Big] &\leq \E\Big[\sum_{p= 1}^{\Pi(q)}  n (\tau_+^n(S_p) - \tau_-^n(S_p))\Big] \\ &= \E\Big(\sum_{p= 1}^{\Pi(q)}  \E[n (\tau_+^n(S_p) - \tau_-^n(S_p))] \Big)
\end{align*}
in this context, where we have used the Wald identity to obtain the latter equality. If we order the stopping times by the size of the corresponding jumps, i.e.\ $S_1$ denotes the time of the largest jump of $N(q)$, $S_2$ is the time of the second largest jump, and so on, then each $S_p$ is uniformly distributed. Therefore $\E[n (\tau_+^n(S_p) - \tau_-^n(S_p))] \leq K$ using Assumption \ref{scheme} (ii), and we obtain the same bound as in \eqref{anders}.

For this reason, we focus in this proof on

\begin{prop}\label{propcltb} On the assumptions of Theorem \ref{theo1b}, we have the $\mathcal F^0$-stable convergence 
\begin{align} n^{1/2} \Big[\Big(\sum_{i=1}^{m_-^n(t)} (\Delta_i^n C(r))^2 -  \int_0^t \sigma(r)_s^2 ds \Big) + 2 \sum_{i=1}^{m_-^n(t)} \Delta_i^n C(r)\Delta_i^n N(q) \Big] \tols \widetilde V_t+ \widetilde Z_t\,.\end{align}
\end{prop}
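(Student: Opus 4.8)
The plan is to split the bracketed expression into the continuous contribution $A_n^{(r)} = n^{1/2}\big(\sum_{i=1}^{m_-^n(t)} (\Delta_i^n C(r))^2 - \int_0^t \sigma(r)_s^2\,ds\big)$ and the jump cross term $B_n^{(r,q)} = 2 n^{1/2}\sum_{i=1}^{m_-^n(t)} \Delta_i^n C(r)\,\Delta_i^n N(q)$, to establish joint $\f^0$-stable convergence of $(A_n^{(r)}, B_n^{(r,q)})$ for fixed $r,q$ as $n\to\infty$, and only afterwards to let $r\to\infty$ and then $q\to\infty$ in accordance with the standing convention. For $A_n^{(r)}$ I would first replace $\int_0^t \sigma(r)_s^2\,ds$ by the discretized characteristic $\sum_i \sigma(r)_{t_{i-1}^n}^2 (t_i^n - t_{i-1}^n)$, the $n^{1/2}$-scaled difference being negligible since $\sigma(r)$ is piecewise constant and $\pi_n \to 0$. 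What remains is a row-sum of martingale differences $\xi_i^n = n^{1/2}\big((\Delta_i^n C(r))^2 - \sigma(r)_{t_{i-1}^n}^2(t_i^n - t_{i-1}^n)\big)$ with respect to $(\f_{t_i^n})$, to which a standard stable martingale central limit theorem (see \cite{jacoshir2003}, \cite{jacodprotter}) applies. The decisive ingredient is the conditional variance: since $\Delta_i^n C(r)$ is, given $\f_{t_{i-1}^n}$, centered Gaussian with variance $\sigma(r)_{t_{i-1}^n}^2(t_i^n - t_{i-1}^n)$, one obtains $\sum_i \E[(\xi_i^n)^2 \mid \f_{t_{i-1}^n}] = 2\sum_i \sigma(r)_{t_{i-1}^n}^4\, n(t_i^n - t_{i-1}^n)^2 \to 2\int_0^t \sigma(r)_s^4\, G'(s)\,ds$ by Assumption \ref{scheme}(i), which is exactly the quadratic variation of $\widetilde V_t^{(r)} := \sqrt 2 \int_0^t \sigma(r)_s^2 (G'(s))^{1/2}\,dW'_s$. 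The remaining hypotheses — a Lyapunov bound $\sum_i \E[(\xi_i^n)^4\mid\f_{t_{i-1}^n}] \to 0$, together with the asymptotic orthogonality $\sum_i \E[\xi_i^n \Delta_i^n W \mid \f_{t_{i-1}^n}] \to 0$ and vanishing covariation against any bounded $\f$-martingale orthogonal to $W$ — follow from \eqref{e1}, the bound on $\E[\pi_n^q]$, and the vanishing of odd Gaussian moments; they force the limit to live on the orthogonal extension driven by the independent $W'$.

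For $B_n^{(r,q)}$ I would exploit that $N(q)$ carries only finitely many jumps on $[0,1]$. On the set $\Omega(q,r,n,t)$ of Appendix \ref{app2}, where no interval holds more than one jump, the cross term collapses to a finite sum over the large jumps, $B_n^{(r,q)} = 2\sum_{p:\,S_p\le t} \Delta N(q)_{S_p}\, n^{1/2}\Delta_{i(p)}^n C(r) + o_{\PP}(1)$, where $i(p)$ indexes the interval $(\tau_-^n(S_p), \tau_+^n(S_p)]$ containing $S_p$. Conditionally on $\f_{\tau_-^n(S_p)}$ and on the interval length, $n^{1/2}\Delta_{i(p)}^n C(r)$ is approximately $\sigma(r)_{S_p}$ times a centered Gaussian of variance $n(\tau_+^n(S_p) - \tau_-^n(S_p))$. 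Assumption \ref{scheme}(ii) is precisely what identifies the joint limit: testing against bounded continuous functions and invoking the factorization of \eqref{relterm} into \eqref{locexist}, the standardized lengths $n(\tau_+^n(S_p) - \tau_-^n(S_p))$ converge jointly to independent variables $\eta(S_p)\sim\Phi(S_p,\cdot)$, so that $n^{1/2}\Delta_{i(p)}^n C(r) \tols \sigma(r)_{S_p}(\eta(S_p))^{1/2} U'_p$ with i.i.d.\ standard normal $U'_p$, whence $B_n^{(r,q)} \tols \widetilde Z_t^{(r,q)} := 2\sum_{p:\,S_p\le t}\Delta N(q)_{S_p}\, \sigma(r)_{S_p}(\eta(S_p))^{1/2} U'_p$.

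The main obstacle is the \emph{joint} stable convergence of $(A_n^{(r)}, B_n^{(r,q)})$, i.e.\ showing that the Gaussian limit of the continuous part and the mixed-normal limit attached to the jumps are, on the extension, mutually independent and independent of $\f$. I would treat both as functionals of the same martingale array and verify that the cross conditional covariance between $\xi_i^n$ and the jump contribution over interval $i$ vanishes: the jump contribution is supported on the finitely many jump intervals, whose total scaled length is $O_{\PP}(n^{-1}\Pi(q)) \to 0$, and on each such interval the relevant product of Brownian increments has vanishing conditional odd moment. Equivalently, conditioning on the jump configuration reduces the jump part to Brownian increments over disjoint jump intervals, independent of the bulk increments driving $A_n^{(r)}$ up to negligible endpoint overlaps. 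This yields, for fixed $r,q$,
\[
A_n^{(r)} + B_n^{(r,q)} \tols \widetilde V_t^{(r)} + \widetilde Z_t^{(r,q)}.
\]

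It remains to pass to the limit in $r$ and $q$. As $r\to\infty$ we have $\sigma(r)_s \to \sigma_s$ pointwise, whence $\widetilde V_t^{(r)} \to \widetilde V_t$ in $L^2$ by It\^o isometry and dominated convergence, while $\sigma(r)_{S_p} \to \sigma_{S_p}$ by continuity of $\sigma$ gives $\widetilde Z_t^{(r,q)} \to \widetilde Z_t^{(q)} := 2\sum_{p:\,S_p\le t}\Delta N(q)_{S_p}\,\sigma_{S_p}(\eta(S_p))^{1/2}U'_p$. Letting $q\to\infty$, $N(q)$ exhausts all jumps of $X$ and, the latter being square-summable with $\sigma,\eta$ bounded, $\widetilde Z_t^{(q)} \to \widetilde Z_t$ in $L^2$ on the extension; the accompanying small-jump contributions are negligible by \eqref{e5} and Proposition \ref{propremainder}. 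Combining these convergences of the limiting variables with the finite-$(r,q)$ stable convergence above, through the standard approximation lemma for stable convergence used in \cite{jacodprotter} and \cite{podovett2010}, delivers the asserted $\f^0$-stable convergence to $\widetilde V_t + \widetilde Z_t$.
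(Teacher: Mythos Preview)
Your overall architecture---split into continuous part $A_n^{(r)}$ and cross term $B_n^{(r,q)}$, prove joint $\f^0$-stable convergence for fixed $(r,q)$, then let $r\to\infty$ and $q\to\infty$---is exactly the paper's. The treatment of $A_n^{(r)}$ via a martingale CLT and the identification of $B_n^{(r,q)}$ as a finite sum over large jumps on $\Omega(q,r,n,t)$ are also the same, as is the final passage to the limit in $r$ and $q$.

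Two points deserve sharpening. First, a filtration issue: when the sampling times are random (living on $\Omega^1$), conditioning on $\f_{t_{i-1}^n}$ is ambiguous; the paper conditions on $\mathcal G_{i-1}^n=\f^0_{t_{i-1}^n}\vee\f^1$, i.e.\ the whole sampling scheme is revealed at time $0$. Your convergence $\sum_i \sigma(r)_{t_{i-1}^n}^4\,n(t_i^n-t_{i-1}^n)^2\to 2\int_0^t\sigma(r)_s^4 G'(s)\,ds$ uses \eqref{Hexist}, which is a statement about the $t_i^n$ alone, so this conditioning is what makes the step legitimate.

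Second, and more importantly, your argument for \emph{joint} stable convergence is where the real work lies and is too compressed. Verifying that a cross conditional covariance vanishes does not by itself yield stable convergence to a pair of $\f$-conditionally independent limits; the jump term $B_n^{(r,q)}$ is not a well-behaved martingale array (the $S_p$ are not predictable, and $\Delta_i^n N(q)$ is not $\mathcal G_{i-1}^n$-measurable). The paper, following Lemma~5.8 of \cite{jacod2008}, proceeds differently: it first removes small neighborhoods $B_\ell=\bigcup_p[S_p-1/\ell,S_p+1/\ell]$ around the jump times from the continuous statistic, then enlarges the initial $\sigma$-field to $\f_0^{0,\ell}$ so that both the $S_p$ and the Brownian path $W(\ell)=\int_0^{\cdot}1_{B_\ell}(s)\,dW_s$ restricted to those neighborhoods become time-$0$ measurable. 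Under a regular conditional probability $Q_\omega$ given $\f_0^{0,\ell}$, the remaining Brownian increments are unaffected and the continuous CLT \eqref{realvar} is reproduced, while the $\alpha(n,p)=n^{1/2}(W_{\tau_+^n(S_p)}-W_{\tau_-^n(S_p)})$ are now $\f_0^{0,\ell}$-measurable, which decouples the two pieces rigorously. Only then does one invoke Assumption~\ref{scheme}(ii) to identify the limit of the interval lengths. Your sentence ``conditioning on the jump configuration reduces the jump part to Brownian increments over disjoint jump intervals, independent of the bulk increments'' captures the intuition, but the actual mechanism requires this localization-and-enlargement device, not a martingale-array cross-covariance computation.
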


Recall that the missing proof of Proposition \ref{propclt} is included as a special case in the previous claim. 

\begin{proof}[Proof of Proposition \ref{propcltb}]
We begin with a proof of the $\mathcal F^0$-stable central limit theorem for realized variance in the continuous case. For fixed $r$, we prove
\begin{align} \label{realvar}
V(r)_t^n := n^{1/2} \Big(\sum_{i=1}^{m_-^n(t)} (\Delta_i^n C(r))^2 -  \int_0^{t} \sigma(r)_s^2 ds \Big) \tols \sqrt 2 \int_0^t \sigma(r)_s^2 (G'(s))^{1/2} dW'_s =: V(r)_t
\end{align}
first. Typically, one rewrites
\[
V(r)_t^n = \sum_{i=1}^{m_-^n(t)} \xi_i^n + o_p(1), \qquad \xi_i^n = n^{1/2}\Big(|\Delta_i^n C(r)|^2 - \int_{t_{i-1}^n}^{t_i^n} \sigma(r)_s^2 \,ds\Big),
\]
using $\E[\pi_n^q] = o(n^{-\alpha})$ from Assumption \ref{scheme}, and then exploits Theorem IX 7.28 in \cite{jacoshir2003} for which several intermediate steps regarding the behaviour of conditional expectations of functionals of $\xi_i^n$ with respect to $\mathcal F^0_{t_{i-1}^n}$ have to be shown. However, as discussed in the proof of Proposition 5.1 in \cite{hayjacyos2011}, one can equally well discuss conditional expectations with respect to $\mathcal G_{i-1}^n = \mathcal F^0_{t_{i-1}^n} \vee \mathcal F^1$. This $\sigma$-algebra represents knowledge of the entire sampling scheme plus knowledge of $X$ up to the stopping time $t_{i-1}^n$. The proof of \eqref{realvar} then boils down to prove 
 \begin{align}
\label{gl1} &\sum_{i=1}^{ m_-^n(t) } \E[\xi_i^n|\mathcal G_{i-1}^n] \stackrel{\mathbb{P}}{\longrightarrow} 0, \\
\label{gl2} &\sum_{i=1}^{m_-^n(t) } \E[(\xi_i^n)^2|\mathcal G_{i-1}^n] \stackrel{\mathbb{P}}{\longrightarrow} 2 \int_0^t \sigma(r)_s^4 G'(s) ds, \\
\label{gl3} &\sum_{i=1}^{m_-^n(t) } \E[\xi_i^n \Delta_i^n M|\mathcal G_{i-1}^n] \stackrel{\mathbb{P}}{\longrightarrow} 0 \text{ for all } M \in \mathcal M, \\
\label{gl4} &\sum_{i=1}^{m_-^n(t) } \E[(\xi_i^n)^4|\mathcal G_{i-1}^n] \stackrel{\mathbb{P}}{\longrightarrow} 0, 
 \end{align}
where $\mathcal M$ is the set of all martingales that are either bounded and orthogonal to $W$ or equal to $W$. Note that (\ref{gl1}) is satisfied identically by construction, whereas (\ref{gl4}) holds using (\ref{e1}) and the assumption on $\pi_n$. The orthogonality condition (\ref{gl3}) follows from standard arguments as e.g.\ laid out in Example 2 of \cite{podovett2010}. What remains to show is thus (\ref{gl2}). First, let $i$ be such that $(k-1) 2^{-r} < t_{i-1}^n < t_i^n < k2^{-r}$ for some $k$. Then
\[\E[(\xi_i^n)^2|\mathcal G_{i-1}^n] = 2n \sigma(r)^4_{(k-1)2^{-r}} (t_i^n - t_{i-1}^n)^2.
\]
Since all other choices of $i$ correspond to at most $2^r$ summands and the length of the intervals between successive observations is bounded by $\pi_n$, we obtain
\[
\sum_{i=1}^{m_-^n(t)} \E[(\xi_i^n)^2|\mathcal G_{i-1}^n] = \sum_{k=1}^{2^r} 2 \sigma(r)^4_{(k-1)2^{-r}} \sum_{i=m_+^n((k-1)2^{-r})}^{m_-^n(k2^{-r})\wedge m_-^n(t)} n (t_i^n - t_{i-1}^n)^2 + o_p(1),
\]
where empty sums are set to be zero. Therefore, since $t_{m_-^n(s)}^n \to s$ for any $s$, the term becomes
\begin{align*}
& \sum_{k=1}^{\lfloor 2^rt \rfloor} 2 \sigma(r)^4_{(k-1)2^{-r}} (G(k2^{-r})-G((k-1)2^{-r})) + 2 \sigma(r)^4_{\lfloor 2^rt \rfloor2^{-r}} (G(t)-G(\lfloor 2^rt \rfloor2^{-r}) + o_p(1) \\ =& 2 \int_0^t \sigma(r)_s^4 \,dG(s) + o_p(1)
\end{align*}
using (\ref{Hexist}), which finishes the proof of (\ref{realvar}) by continuous differentiability of $G$.

On the other hand, we need a joint result regarding $V(r)_t^n$ and the term involving the jumps. Thus, for a fixed $q$, let $S_1, S_2, \ldots$ be a sequence of stopping times exhausting the jumps of $N(q)$. We set $\alpha(n,p) = n^{1/2} (W_{\tau_+^n(S_p)} - W_{\tau_-^n(S_p)})$, and what we discuss in a first step is the $\mathcal F^0$-stable convergence
\begin{align} \label{lemstab1}
(V(r)_t^n,\alpha(n,p)_{p \geq 1}) \tols (V(r)_t,(\eta(S_p)^{1/2} U'_p)_{p \geq 1}), 
\end{align}
as $n \to \infty$. 

The proof is close to the one of Lemma 5.8 in \cite{jacod2008} for most parts, which is why we do not give all details but focus on the intuition behind the steps. Let us begin with several remarks: First, we order the stopping times by the size of the corresponding jumps again. Second, it is enough to prove the result for a fixed $k$, that is we show 
\begin{align} \label{lemstab}
(V(r)_t^n,\alpha(n,p)_{1 \leq p \leq k}) \tols (V(r)_t,(\eta(S_p)^{1/2} U'_p)_{1 \leq p \leq k}) 
\end{align}
only. Formally this means that we have to prove
\begin{align*}
\E\Big[\Psi g(V(r)_t^n) \prod_{p=1}^k h_p(\alpha(n,p))\Big] \to \widetilde \E\Big[\Psi g(V(r)_t) \prod_{p=1}^k h_p(\eta(S_p)^{1/2} U'_p)] 
\end{align*}
for any $\mathcal F^0$-measurable $\Psi$ and for all bounded continuous functions $g,h_1, \ldots, h_k$. However, it is sufficient to focus on $\mathcal G$-measurable variables, which is the $\sigma$-algebra generated by the measure $\mu$ and the processes $b$, $\sigma$, $W$ and $X$, as one may otherwise replace $\Psi$ with $\E[\Psi|\mathcal G]$ and use measurability with respect to $\mathcal G$ of all other variables. 

The first step to obtain the previous relation is to replace $V(r)_t^n$ by some $V(r,\ell)_t^n$ which is defined over those intervals only that do not intersect with those on which the $\alpha(n,p)$ are defined. This approach secures conditional independence of the limiting Brownian motion $W'$ and the normally distributed $U'_p$ later. Precisely, let $S_p^{\ell-}=(S_p-1/\ell)^+$ and $S_p^{\ell+}=S_p+1/\ell$ denote intervals around the jump times and set $B_\ell = \cup_{p \geq 1}^k[S_p^{\ell-},S_p^{\ell+}]$. Let $\Lambda_n(\ell,t)$ denote the set of indices $i$ such that $i \leq m_-^n(t)$ and $[t_{i-1}^n,t_i^n] \cap B_\ell = \emptyset$.  It is rather simple to see that both $V(r)_t^n$ and $V(r,\ell)_t^n=\sum_{i \in \Lambda_n(\ell,t)} \xi_i^n$ and $V(r)_n$ and $V(r,\ell)_n = \sqrt 2 \int_0^t 1_{B_\ell^c}(s) \sigma(r)_s^2 (G'(s))^{1/2} dW'_s$ are in a suitable sense close for large $\ell$, which means that it suffices to prove 
\begin{align*} 
\E\Big[\Psi g(V(r,\ell)_t^n)  \prod_{p=1}^k h_p(\alpha(n,p))\Big] \to \widetilde \E\Big[\Psi g(V(r,\ell)_t)  \prod_{p=1}^k h_p(\eta(S_p)^{1/2} U'_p)\Big]
\end{align*}
for each fixed $\ell$. Fix one such $\ell$ in the following. 
 
Introduce the filtration $\mathcal F^{0,\ell}$ which is the smallest one containing $\mathcal F^{0}$ and is defined in such a way that $W(\ell)_t = \int_0^t 1_{B_\ell}(s) dW_s$ (and the times $S_1, \ldots, S_k$) is $\mathcal F_0^{0,\ell}$-measurable. We will now work conditionally on $\mathcal F_0^{0,\ell}$, so let $Q_\omega$ denote a regular version of this conditional probability. Since $\Delta_i^n W$ is independent of $\mathcal F_0^{0,\ell}$ (for $n$ large enough) and remains normally distributed for $i \in \Lambda_n(\ell,t)$, which contains all but a finite number of indices, reproducing the proof of (\ref{realvar}) yields 
\[
\E_{Q_\omega}[\Psi g(V(r,\ell)_t^n)] \to \widetilde \E_{\tilde Q_\omega}[\Psi g(V(r,\ell)_t)].
\]
Thus, using $\mathcal F_0^{0,\ell}$-measurability of 
$\prod_{p=1}^k h_p(\alpha(n,p))$, we obtain 
\begin{align*}
\E\big[\Psi g(V(r,\ell)_t^n) \prod_{p=1}^k h_p(\alpha(n,p))\big] &= \E\big[\E_{Q_\omega}[\Psi g(V(r,\ell)_t^n)] \prod_{p=1}^k h_p(\alpha(n,p)) \big] \\ &\sim \E\big[\E_{\tilde Q_\omega}[\Psi g(V(r,\ell)_t)] \prod_{p=1}^k h_p(\alpha(n,p)) \big].
\end{align*}
Since $\E_{\tilde Q_\omega}[\Psi g(V(r,\ell)_t)]$ is $\mathcal F_0^{0,\ell}$-measurable again, everything then boils down to prove 
\begin{align*} 
\E\big[\Gamma \prod_{p=1}^k h_p(\alpha(n,p))\big] \to \widetilde \E\big[\Gamma \prod_{p=1}^k h_p(\eta(S_p)^{1/2} U'_p)\big].
\end{align*}
for any $\mathcal F_0^{0,\ell}$-measurable $\Gamma$. By conditioning, we may again restrict us to those variables generated by the jump times of $N(q)$ and the Brownian motion $W(\ell)$, and using Lemma 2.1 in \cite{jacodprotter} this means to prove
\begin{align*}
\E\big[ \gamma(W(\ell))  \kappa(S_1, \ldots, S_k)\prod_{p=1}^k h_p(\alpha(n,p))\big] \to \widetilde \E\big[ \gamma(W(\ell))  \kappa(S_1, \ldots, S_k)\prod_{p=1}^k h_p(\eta(S_p)^{1/2} U'_p)\big]
\end{align*}
for any bounded continuous functions $ \gamma$ and $ \kappa$. Since $W(\ell)$ is independent of any other quantity involved, this means finally that we have to prove
\[
\E\Big[ \kappa(S_1,\ldots,S_k) \prod_{p=1}^k h_p(\alpha(n,p))\Big] \to \E\big[ \kappa(S_1,\ldots,S_k) \prod_{p=1}^k h_p(\eta(S_p)^{1/2} U'_p)\big].
\]
On the set $\Omega(q,r,n,t)$, introduced in the final step of the proof of Proposition \ref{propremainder}, the variables $\alpha(n,p)$ are defined over non-overlapping intervals which explains independence of the limiting variables. Since $\Omega(q,r,n,t) \to \Omega$ for $n \to \infty$, we may add (and subtract again) an indicator function over $\Omega(q,r,n,t)$ on the left hand side above. Then, 
\[
\E\big[ \kappa(S_1,\ldots,S_k) \prod_{p=1}^k h_p(\alpha(n,p))\big] \sim \widetilde \E\big[ \kappa(S_1,\ldots,S_k) \prod_{p=1}^k h_p((n(\tau^n_+(S_p) - \tau^n_-(S_p)))^{1/2} U'_p)\big].
\]
We are therefore left to prove the stable convergence of $(n(\tau^n_+(S_p) - \tau^n_-(S_p)))_{1 \leq p \leq k}$ to $(\eta(S_p))_{1 \leq p \leq k}$ which is exactly condition \eqref{locexist} using the joint uniform distribution of the jump times of $N(q)$.

Finally, on $\Omega(q,r,n,t)$ all of the jumps occur on intervals of piecewise constancy of $\sigma(r)$. Thus we obtain easily
\[
V(r)_t^n + 2n^{1/2} \sum_{i=1}^{m^n_-(t)} \Delta_i^n C\Delta_i^n N(q) \tols V(r)_t + 2 \sum_{p} \Delta N(q)_{S_p} \sigma(r)_{S_p} \eta(S_p)^{1/2} U'_p.
\]
The proof can be finished by first letting $r \to \infty$ and then $q \to \infty$: Since $\sigma$ is continuous, we have both $\WEE[|V(r)_t - V_t|^2] \to 0$ and
\[
\sum_{p} \WEE[|\Delta N(q)_{S_p}| |\sigma(r)_{S_p} - \sigma_{S_p}| |\eta(S_p)^{1/2}| |U'_p|] \leq K \sum_{p} \E[1_{\{S_p \leq t\}} |\sigma(r)_{S_p} - \sigma_{S_p}|] \to 0
\]
by successive conditioning and boundedness of the first moments of $\eta(S_p)$ and $\Pi(q)$. Finally, 
\[
\WEE\Big[\Big|Z_t - \sum_{p} \Delta N(q)_{S_p} \sigma_{S_p} \eta(S_p)^{1/2} U'_p\Big|^2\Big] \leq K \E\Big[ \sum_{t} |\Delta X_s|^2 1_{|\Delta X_s| \leq 1/q} \Big] \to 0,
\]
again from Lebesgue's theorem. 
%Since $\sigma$ is continuous, we have $\WEE[|V(r)_t - V_t|^2] \to 0$, and also
%\[
%\sum_{p} \WEE[|\Delta N(q)_{S_p}| |\sigma(r)_{S_p} - \sigma_{S_p}| |\eta(S_p)^{1/2}| |U'_p|] \leq K \sum_{p} \E[1_{\{S_p \leq t\}} |\sigma(r)_{S_p} - \sigma_{S_p}|^4]^{1/4},
%\]
%where we have used boundedness of the jumps, Cauchy-Schwarz inequality twice and boundedness of $\WEE[(U'_p)^4]$ and $\WEE[\eta(S_p)]$, the latter being is satisfied by Assumption \ref{scheme}. $\E[(\Pi(q))^2] \leq K$, continuity of $\sigma$ and Lebesgue's theorem yield convergence to zero of the term above. Finally, 
%\[
%\WEE\Big[\Big|Z_t - \sum_{p} \Delta N(q)_{S_p} \sigma_{S_p} \eta(S_p)^{1/2} U'_p\Big|^2\Big] \leq K \E\Big[ \sum_{t} |\Delta X_s|^2 1_{|\Delta X_s| \leq 1/q} \Big] \to 0
%\]
%from Lebesgue's theorem. 
\end{proof}

\section{Proof of Theorem \ref{theo2} \label{app3}}
\setcounter{equation}{0}
\renewcommand{\theequation}{\thesection.\arabic{equation}} 
We decompose $X^{(1)}$ and $X^{(2)}$ in the same way as above and denote the single addends likewise. The first part of the proof establishes the limit theorem for the leading variance term:
\begin{prop}\label{propclt2}On the assumptions of Theorem \ref{theo2},
the term
\begin{align}&\notag\sqrt{n}\Big(\sum_{t_i^{(1)}\le t}\sum_{t_j^{(2)}\le t}\Big(\Delta_i	^{n_1}C^{(1)}(r)\Delta_j^{n_2}C^{(2)}(r)+\Delta_j^{n_2}N^{(2)}(q)\Delta_i^{n_1}C^{(1)}(r)+\Delta_i^{n_1}N^{(1)}(q)\Delta_j^{n_2}C^{(2)}(r)\Big)\Big. \\
&\left.\quad \label{step1t} \times 1_{\{\min{(t_{i}^{(1)},t_{j}^{(2)})}>\max{(t_{i-1}^{(1)},t_{j-1}^{(2)})}\}}-\int_0^t(\rho\sigma^{(1)}\sigma^{(2)})(r)_s\,ds\right)\end{align}
satisfies:
\begin{align}\label{step1prop2}\eqref{step1t}\tols \tilde {\mathcal{V}}_t+\tilde {\mathcal{Z}}_t\,.\end{align}
\end{prop}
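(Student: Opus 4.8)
The plan is to follow the architecture of the proof of Proposition~\ref{propcltb}, decomposing the claim into a central limit theorem for the purely continuous cross term and a joint stable convergence statement that folds in the two jump-times-continuous cross terms, and only at the very end letting $r\to\infty$ and $q\to\infty$. For the continuous part I would work with the refresh-time illustration \eqref{varHY3}, which writes the $\sqrt n$-scaled estimation error --- call it $V(r)_t^n$ again --- as a sum of martingale differences over the synchronous grid $(T_k)$. Conditioning on $\mathcal G_{k-1}^n=\f^0_{T_{k-1}}\vee\f^1$ exactly as in the univariate argument, I would verify the four conditions behind Theorem~IX~7.28 in \cite{jacoshir2003}: the centring and Lyapunov conditions (the analogues of \eqref{gl1} and \eqref{gl4}) from \eqref{e1} together with $\E[\pi_n^q]=o(n^{-\alpha})$, the orthogonality condition (the analogue of \eqref{gl3}) as in \cite{podovett2010}, while the quadratic-variation condition (the analogue of \eqref{gl2}) is where the three functionals of Assumption~\ref{schememult}(i) enter. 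The interpolation increments $\Delta_k^{\pm,n_l}$ appearing in \eqref{varHY3} produce precisely the three summands of $v_s^2$ in \eqref{vprocess}: the squared refresh increments give the $G'$-term, the squared interpolation steps the $F'$-term, and the covariance between consecutive summands the $H'$-term, so that $G_n,F_n,H_n\to G,F,H$ yields $\tilde{\mathcal V}_t$.

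For the jump contribution I would fix $q$, let $S_1,S_2,\dots$ exhaust the jumps of $N(q)$ ordered by size (so each $S_p$ is uniformly distributed), and use illustration \eqref{varHY1} to trace a jump of $X^{(1)}$ and \eqref{varHY2} for a jump of $X^{(2)}$. On the set $\Omega(q,r,n,t)$ from the last step of the proof of Proposition~\ref{propremainder} each cross term reduces to $\Delta X^{(l)}_{S_p}$ times the continuous increment of the other component over its interpolated interval, $[\tau_{--}^{(2,1)}(S_p),\tau_{++}^{(2,1)}(S_p)]$ for a jump of $X^{(1)}$ and symmetrically for $X^{(2)}$. The decisive geometric fact, read off \eqref{rl1}--\eqref{l3}, is that such an interval splits into the overlap $(\mathcal R_n^1+\mathcal L_n^1)(S_p)$ shared by the two cross terms and the outer pieces $\mathcal R_n^3,\mathcal L_n^3$ of the $X^{(2)}$-interval (feeding the $\Delta X^{(1)}$ term) respectively $\mathcal R_n^2,\mathcal L_n^2$ of the $X^{(1)}$-interval (feeding the $\Delta X^{(2)}$ term), where by construction exactly one of $\mathcal R_n^2,\mathcal R_n^3$ and one of $\mathcal L_n^2,\mathcal L_n^3$ is nonzero according to which component ticks first. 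I would then establish the joint $\f^0$-stable convergence of $V(r)_t^n$ together with the $\sqrt n$-normalized increments of $W^{(1)},W^{(2)}$ over these five segments around each $S_p$ to their Gaussian limits built from $U_p^{(m)},Q_p^{(m)}$ on $\probpo$, mimicking Lemma~5.8 in \cite{jacod2008}: localize by replacing $V(r)_t^n$ with $V(r,\ell)_t^n$ defined over intervals disjoint from neighbourhoods $B_\ell$ of the jump times, condition on the field $\f_0^{0,\ell}$ carrying $W\,1_{B_\ell}$, reproduce the continuous CLT on the complement to decouple $W'$ from the jump variables, and reduce the claim to the stable convergence of $(n(\mathcal R_n^1+\mathcal L_n^1),n\mathcal R_n^2,n\mathcal L_n^2,n\mathcal R_n^3,n\mathcal L_n^3)(S_p)$ to $\mathbf{\Phi}(S_p,d\mathbf{y})$, which is Assumption~\ref{schememult}(ii) at the uniformly distributed $S_p$.

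The correlation structure of $\tilde{\mathcal Z}_t$ is then assembled from the volatility matrix $\sigma_s$: over the shared overlap both cross terms are driven by the \emph{same} Brownian increments, so if $U_p^{(1)}$ encodes the standardized continuous increment of $X^{(2)}$ there, the contemporaneous increment of $X^{(1)}$ standardizes to $\rho_{S_p}U_p^{(1)}+\sqrt{1-\rho_{S_p}^2}\,Q_p^{(1)}$, reproducing exactly the coefficients in $\tilde{\mathcal Z}_t$, whereas over the outer segments the standardized lengths --- and hence the associated increments --- become mutually independent across segments and across the $S_p$ by Assumption~\ref{schememult}(ii), giving the independent $U_p^{(2)},Q_p^{(2)},U_p^{(3)},Q_p^{(3)}$. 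Finally I would send $r\to\infty$ and then $q\to\infty$ as at the close of the proof of Proposition~\ref{propcltb}, using continuity of $\sigma^{(1)},\sigma^{(2)},\rho$ and $e_q\to0$ from \eqref{e5} to pass from $\sigma(r),N(q)$ to the full coefficients and jump part. The hard part will be the middle step: faithfully bookkeeping the asynchronous geometry --- which of the outer segments survives at each $S_p$ and how the interpolated interval of one component straddles the refresh grid --- and matching the shared-overlap increments so that the two cross terms inherit the correct $\rho_{S_p}$-correlation, rather than being mistakenly treated as independent.
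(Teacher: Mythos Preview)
Your proposal is correct and follows essentially the same architecture as the paper's proof: both invoke the continuous-part CLT for the HY estimator (the paper cites \cite{hayayosh2011} and \cite{asyn} directly, whereas you sketch the martingale-CLT verification behind those results), then extend Lemma~5.8 of \cite{jacod2008} by introducing the vector of rescaled Brownian increments over the segments \eqref{rl1}--\eqref{l3}, localize around jump times, and reduce to Assumption~\ref{schememult}(ii) via the uniform distribution of the $S_p$. Your more explicit bookkeeping of the shared-overlap correlation and the $r\to\infty$, $q\to\infty$ passage is exactly what the paper leaves implicit by the phrase ``follows in an analogous way as for Theorem~\ref{theo1b}''.
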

\begin{proof}
The claim draws on the stable central limit theorem for the continuous semimartingale as one building block. For the local parametric approximation with fixed $r$
\begin{align*}\tilde {\mathcal{V}}(r)_t^n:=\sqrt{n}\Big(\sum_{t_i^{(1)}\le t}\sum_{t_j^{(2)}\le t}(\Delta_i	^{n_1}C^{(1)}(r)\Delta_j^{n_2}C^{(2)}(r))1_{\{\min{(t_{i}^{(1)},t_{j}^{(2)})}>\max{(t_{i-1}^{(1)},t_{j-1}^{(2)})}\}}-\int_0^t(\rho\sigma^{(1)}\sigma^{(2)})(r)_s\,ds\Big),\end{align*}
we have that $\tilde {\mathcal{V}}(r)_t^n\tols \tilde {\mathcal{V}}(r)_t$; see \cite{hayayosh2011} and \cite{asyn}. Thus, the proof affiliates to the proof of Theorem \ref{theo1b} based on an extension of Lemma 5.8 in \cite{jacod2008}. 
%For a fixed integer $q$, consider the set $Q=\{z\in\R^2|\gamma(z)>q^{-1}\}$ and note that $1_Q*\mu$ is a Poisson process with parameter $\lambda(Q)$, where $\lambda$ denotes the Lebesgue measure. The jump times of $Q$ are denoted $S_p,1\le p\le \Pi(q)_t$, and $1_Q*\mu$ takes the value $\Pi(q)_t$ at time $t\in[0,1]$. The set $\{S_p\}$ contains co-jumps and idiosyncratic jumps. We restrict to the set
The only major difference regards the terms involving the (co-)jumps, as more quantities are of interest now. Set
\begin{align*}\alpha(n,p)&=n^{\nicefrac12}\Big(\Big(W_{\tau_{++}^{(1,2)}(S_p)}^{(1)}-W^{(1)}_{\max{(\tau_+^{1}(S_p),\tau_+^2(S_p))}}\Big),
\Big(W^{(1)}_{\min{(\tau_-^{(1)}(S_p),
\tau_-^2(S_p))}}-W_{\tau_{--}^{(1,2)}(S_p)}^{(1)}\Big),\\
&\quad \Big(W_{\tau_{++}^{(2,1)}(S_p)}^{(2)}-W^{(2)}_{\max{(\tau_+^{1}(S_p),\tau_+^2(S_p))}}\Big), \Big(W^{(2)}_{\min{(\tau_-^{(1)}(S_p),
\tau_-^2(S_p))}}-W_{\tau_{--}^{(2,1)}(S_p)}^{(2)}\Big),\\
&\quad \Big(W^{(1)}_{\max{(\tau_+^{(1)}(S_p),\tau_+^2(S_p))}}-W^{(1)}_{\min{(\tau_-^{(1)}(S_p),
\tau_-^2(S_p))}}\Big),
\Big(W^{(2)}_{\max{(\tau_+^{(1)}(S_p),\tau_+^2(S_p))}}-W^{(2)}_{\min{(\tau_-^{(1)}(S_p),\tau_-^2(S_p))}}\Big)
\Big),
\end{align*}
motivated in \eqref{rl1}--\eqref{l3} above. The convergence of $\alpha(n,p)$ to mixtures of independent normal limiting variables and the joint convergence follow in an analogous way as for Theorem \ref{theo1b} then, using \eqref{locexist2mult} to establish stable convergence of the corresponding lengths of the intervals.
 \end{proof}

\begin{prop}\label{propremainder2} On the assumptions of Theorem \ref{theo2}:
\begin{align}\label{propd2}
\lim_{q \to \infty} \limsup_{r \to \infty} \limsup_{n \to \infty} \sqrt{n}\left|\widehat{\left[ X^{(1)}, X^{(2)}\right]}_t^{(HY),n}-\eqref{step1t}\right|\stackrel{\PP}{\longrightarrow}0
\end{align}
and, moreover, the right-hand side of \eqref{propd2}/$\sqrt{n}$ tends to zero in probability on milder assumptions as long as the mesh $\pi_n\rightarrow 0$. 
\end{prop}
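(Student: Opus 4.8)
The plan is to mimic the univariate analysis of Proposition~\ref{propremainder}, now for the HY-estimator. First I would decompose each component $X^{(l)}$, $l=1,2$, into the six building blocks $B(q,r),B'(q,r),C(r),C'(r),N(q),M(q)$ of Appendix~\ref{app1}. Inserting these into any of the equivalent forms \eqref{HY}--\eqref{varHY3} expands $\widehat{[X^{(1)},X^{(2)}]}^{(HY),n}_t$ into a sum of $36$ products of the type $\Delta^{n_1}_i Y^{(1)}\,\Delta^{n_2}_j Z^{(2)}$ (carrying the overlap indicator), with $Y,Z$ ranging over the six pieces. The three products collected in \eqref{step1t} — namely $C(r)^{(1)}C(r)^{(2)}$ and the two cross terms $N(q)\,C(r)$ — are the leading ones already handled in Proposition~\ref{propclt2}; the task here is to show that every remaining product, after multiplication by $\sqrt n$, vanishes in the iterated limit $n\to\infty$, $r\to\infty$, $q\to\infty$, and that without the rate it already vanishes under $\pi_n\to0$. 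The organizational key, stressed around \eqref{varHY1}--\eqref{varHY3}, is to select the representation that isolates each term cleanly: the refresh-time form \eqref{varHY3} for the purely continuous and small-jump martingale products, where the summands become martingale differences, and the one-sided traces \eqref{varHY1}, \eqref{varHY2} whenever a genuine jump $N(q)^{(1)}$ resp.\ $N(q)^{(2)}$ is present, so that the jump multiplies a single interpolated increment of the other component.

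Next I would dispatch the analogues of \eqref{step2}--\eqref{step5}. The approximation-error products $C'(r)C'(r)$, $C(r)C'(r)$ and $M(q)M(q)$ are rewritten through \eqref{varHY3} as sums of martingale increments and bounded by Burkholder--Davis--Gundy, It\^o isometry and Cauchy--Schwarz together with \eqref{e1}--\eqref{e3}, the small-jump factor $e_q\to0$ of \eqref{e5} furnishing the final decay in $q$ for every term carrying an $M(q)$; the co-jump product $N(q)^{(1)}N(q)^{(2)}$ recovers the co-jump part of \eqref{qcv} and its scaled error vanishes identically on the set $\Omega(q,r,n,t)$, exactly as the $N$-term of \eqref{step2}. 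The genuinely new feature compared to Appendix~\ref{app2} is that each increment now also carries the interpolation pieces $\Delta^{\pm,n_l}_k$ of length of order $\pi_n$; their contribution, together with the deterministic $O_p(\pi_n)$ end-effects of \eqref{varHY1}--\eqref{varHY3}, is controlled by the mesh hypothesis $\E[\pi_n^q]=o(n^{-\alpha})$, giving after scaling a bound of order $\sqrt n\,\pi_n=o_p(1)$. The pure-drift products (analogue of \eqref{step3}) and the mixed drift--martingale products (analogue of \eqref{step4}) are treated deterministically via $|\Delta^{n_l}_i B(q,r)|\le K_q\pi_n$ and the modulus-of-continuity argument, verbatim as before.

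The heart of the proof are the cross terms in which a jump of one component multiplies a \emph{non-leading} continuous increment of the other — the two-dimensional counterpart of \eqref{step6}, for instance $N(q)^{(1)}$ against the $C'(r)^{(2)}$-increment over the interpolated interval $[\tau^{(2)}_-(t^{(1)}_{i-1}),\tau^{(2)}_+(t^{(1)}_i)]$. Using \eqref{varHY1} to trace out the jumps of $X^{(1)}$, I would localize onto stopping times $S_p$ exhausting the jumps of $N(q)$, condition on $\f_{S_p}$, and bound the interpolated $C'(r)^{(2)}$-increment by $Kn^{-1/2}\E[w(\sigma,2^{-r})^2\,|\,\f_{S_p}]^{1/2}$ as in \eqref{summe}--\eqref{anders}, the extra length factor being supplied by the expected interpolation length $\E[n(\tau^{(2)}_+(S_p)-\tau^{(2)}_-(S_p))]\le K$. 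This last bound is precisely what Assumption~\ref{schememult}(ii) provides: ordering the $S_p$ by jump size renders them uniformly distributed, and the Wald-type identity employed in Appendix~\ref{app2b} turns the sum over $p$ into the finite-first-moment condition on $\mathbf{\Phi}$, with $\E[\Pi(q)^2]\le K_q$ keeping the number of jumps in check. Continuity of $\sigma^{(l)}$ then forces $w(\sigma,2^{-r})\to0$ and closes the estimate on letting $r\to\infty$; the products coupling $M(q)$ with drift or with $\widetilde M(q)$ (analogue of \eqref{step7}) follow the two-martingale bound \eqref{twomartin} and the $e_q$-decay.

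I expect the main obstacle to be the bookkeeping of the interpolation windows in these jump--continuous cross terms under asynchronicity: a single jump of $X^{(1)}$ at $S_p$ is multiplied, depending on whether $\mathcal{R}^2_n(S_p)>0$ or $\mathcal{R}^3_n(S_p)>0$, by increments of $X^{(2)}$ over intervals of random length of order $\pi_n$ that may straddle several $X^{(2)}$-observation intervals, so one must check that passing among \eqref{HY}--\eqref{varHY3} merely rearranges and never double-counts these contributions, and that the continuous increment thereby isolated is genuinely the negligible $C'(r)^{(2)}$ piece and not the leading $C(r)^{(2)}$ one. The second (consistency) assertion is easier: dropping the $\sqrt n$ prefactor, each remainder product pairing a jump with a continuous increment is $O_p(\pi_n^{1/2})$ or smaller, so $\pi_n\to0$ alone suffices, while the retained terms converge to \eqref{qcv} by the standard recovery of the full quadratic covariation from realized covariation over a vanishing mesh.
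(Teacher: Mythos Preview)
Your proposal is correct and follows essentially the same route as the paper: decompose each component into the six pieces of Appendix~\ref{app1}, expand the HY-estimator into the resulting cross products, and then reproduce the remainder estimates of Proposition~\ref{propremainder} (resp.\ its irregular-sampling extension in Appendix~\ref{app2b}) term by term, with the jump--continuous cross terms handled via the Wald-type bound on $\E[n(\tau_+^{(l)}(S_p)-\tau_-^{(l)}(S_p))]$ supplied by Assumption~\ref{schememult}(ii). The paper's proof is in fact much terser than your sketch and simply points back to the proof of Theorem~\ref{theo1}.

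The one concrete device the paper singles out, which you treat only implicitly, is how to absorb the interpolation pieces $\Delta_k^{\pm,n_l}$ arising from the refresh-time representation~\eqref{varHY3}. Rather than bounding them directly through the mesh hypothesis as you suggest, the paper uses the elementary inequality
\[
\Delta_k^2+\Delta_k\Delta_k^++\Delta_k\Delta_k^-+\Delta_k^+\Delta_k^-\le \Delta_k^2+\Delta_{k+1}\Delta_k+\Delta_{k-1}\Delta_k+\Delta_{k-1}\Delta_{k+1},
\]
i.e.\ each interpolation length is dominated by an adjacent refresh-time instant. After Cauchy--Schwarz this reduces every interpolated product to the synchronous form $\sum_k(\Delta_k^n)^2$, so the estimates \eqref{e1}--\eqref{e5} apply verbatim without a separate mesh argument. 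Your route would also work, but this shortcut is what lets the paper dispatch the whole proposition in a few lines.
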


\begin{proof}Based on the decomposition of $X$ as before, the terms are treated analogously as in the proof of Theorem \ref{theo1}. Most upper bounds can be deduced along the same lines, with the exception that based on the illustration \eqref{varHY3} one has also to consider interpolated terms and dependence of adjacent addends. Yet, when denoting $\Delta_k^+, \Delta_k^-$ the interpolation intervals which are non-zero, we may employ the simple estimate
\[\Delta_k^2+\Delta_k\Delta_k^++\Delta_k\Delta_k^-+\Delta_k^+\Delta_k^-\le \Delta_k^2+\Delta_{k+1}\Delta_k+\Delta_{k-1}\Delta_k+\Delta_{k-1}\Delta_{k+1}\,,\]
where $\Delta_{k+1}=T_{k+1}-T_k$ are refresh time instants as before. After an application of the Cauchy-Schwarz inequality and/or measurability arguments the addends of the remainder terms have the same structure as in the synchronous case. Hence, consistency of the HY-estimator and the CLT readily follow from the standard estimates \eqref{e1}--\eqref{e5} with the strategy of proof from the proof of Theorem \ref{theo1}.
\end{proof}
\bibliography{jumps}

\end{document}